\documentclass[11pt]{amsart}
\usepackage{bm}
\usepackage{fullpage}
\usepackage{amssymb}
\usepackage{amsmath,amsfonts,amsthm}
\usepackage{hyperref}
\usepackage{color}
\usepackage{soul}
\usepackage{cite}
\usepackage{tikz}
\definecolor{uuuuuu}{rgb}{0.26666666666666666,0.26666666666666666,0.26666666666666666}

\newtheorem{theorem}{Theorem}[section]
\newtheorem{problem}{Problem}[theorem]

\newtheorem{lemma}[theorem]{Lemma}

\newtheorem{claim}[theorem]{Claim}
\newtheorem{proposition}[theorem]{Proposition}
\newtheorem{corollary}[theorem]{Corollary}

\newtheorem{alphtheorem}{Theorem}

\newtheorem{alphlemma}{Lemma}

%

%\AtBeginEnvironment{proof-claim}{\vspace{-0.5cm}\footnotesize\begin{changemargin}{0.5cm}{0.5cm}}\AtEndEnvironment{proof-claim}{\end{changemargin}}

%\def\T{\mathbf{T}}

\DeclareMathOperator\G{\mathcal{G}}

\DeclareMathOperator\F{\mathcal{F}}

\DeclareMathOperator\C{\mathcal{C}}

\def\isdef{\mbox {$\ \stackrel{\rm def}{=} \ $}}
\title{ Vertex Partitions and	Maximum $\G$-free Subgraphs}

\author{Yaser Rowshan$^1$}
\keywords{ Vertex partitionable,  $\G$-free Subset, Clique number, Degenerate graphs, $d$-regular graphs.}
\subjclass[2010]{05C69, 05C35, 05C15.}
\address{$^1$Y. Rowshan, 
	Department of Mathematics, Institute for Advanced Studies in Basic Sciences (IASBS), Zanjan 45137-66731, Iran.}
\email{y.rowshan@iasbs.ac.ir,~~~y.rowshan.math@gmail.com.}

\begin{document}
	\maketitle 	
	\begin{abstract}
		We define a $(V_1, V_2, \ldots, V_k)$-partition for a given graph $H$ and graphical properties $P_1, P_2, \ldots, P_k$ as a partition where each $V_i$ induces a subgraph of $H$ with property $P_i$. In 1979, Bollob'{a}s and Manvel demonstrated that if a graph $H$ has a maximum degree $\Delta(H)\geq 3$ and clique number $\omega(H)\leq \Delta(H)$, with $\Delta(H)=p+q$, there exists a $(V_1, V_2)$-partition of $V(H)$. This partition ensures that $\Delta(H[V_1])\leq p$, $\Delta(H[V_2])\leq q$, $H[V_1]$ is $(p-1)$-degenerate, and $H[V_2]$ is $(q-1)$-degenerate. Matamala (2007) extended this result by showing that for any graph $H$ with $\Delta(H)=p+q$, there exists a $(V_1, V_2)$-partition of $V(H)$ where $H[V_1]$ is a maximum order $(p-1)$-degenerate induced subgraph and $H[V_2]$ is $(q-1)$-degenerate. Additionally, Catlin and Lai proved that if $\Delta(H)\geq 5$, $H$ has a $(V_1, V_2)$-partition such that $H[V_1]$ is a maximum order acyclic induced subgraph, $\omega(H[V_2])\leq \Delta(H)-2$, and $\Delta(H[V_2])\leq \Delta(H)-2$.
		
		Rowshan and Taherkhani demonstrated that given a graph $G$ with a minimum degree $\delta(G)$ and for $k=\lceil \frac{\Delta(H)}{\delta(G)}\rceil$, there exists a $(V_1, V_2, \ldots, V_k)$-partition of the vertex set of $H$, such that each $H[V_i]$ is $G$-free, meaning it does not contain a subgraph isomorphic to $G$, and $H[V_1]$ is a maximum order $G$-free induced subgraph.
		
		In our paper, we present a novel result for a connected graph $H$ with $\Delta(H)\geq 5$ and without $K_{\Delta(H)+1}\setminus e$ as a subgraph. We establish that when $p_1\geq p_2\geq\cdots\geq p_{k-1}\geq 2$, $p_k\geq 4$, $\sum_{i=1}^k p_i=\Delta(H)-1+k$, and $\mathcal{G}_i$ represents a family of graphs with a minimum degree at least $p_i-1$ for each $i\in [k-1]$, a $(V_1, V_2, \ldots, V_k)$-partition of $V(H)$ exists. This partition guarantees that $H[V_1]$ is a maximum order $\mathcal{G}_1$-free induced subgraph, $H[V_i]$ is $\mathcal{G}_i$-free for each $2\leq i\leq k-1$, $\Delta(H[V_k])\leq p_k$,  and either $H[V_k]$ is $K_{p_k}$-free or its $p_k$-cliques are disjoint.
	\end{abstract}
	
	\section{Introduction} 
	In this article, all graphs under consideration are finite, undirected, and simple. For a given graph $H=(V(H),E(H))$, the degree of a vertex $v\in V(H)$ is denoted as $\deg_H(v)$ (or simply $\deg(v)$), and its set of neighbors is represented by $N_H(v)$ (or $N(v)$). The maximum degree of graph $H$ is denoted as $\Delta(H)$, and the minimum degree as $\delta(H)$. When referring to a subset $W$ of $V(H)$, the induced subgraph on $W$ is denoted as $H[W]$. For two disjoint subsets $V_1$ and $V_2$ of $V(H)$, the set $E(V_1, V_2)$ represents all the edges $vv' \in E(H)$, where $v\in V_1$ and $v'\in V_2$. The clique number $\omega(H)$ of a graph $H$ is defined as the largest integer $k$ for which $H$ contains a complete subgraph of size $k$. The join of two graphs $G$ and $H$ is denoted as $G\oplus H$ and is obtained by connecting each vertex of $G$ to every vertex in $H$. Furthermore, when referring to an edge $e$ in graph $G$, we use $G\setminus e$ to denote the graph resulting from the removal of $e$ in $G$.
	
	%============================================
	We define a $(V_1,V_2,\ldots,V_k)$-partition for a given graph $H$ and graphical properties $P_1,P_2,\ldots,P_k$ such that each subgraph induced on $V_i$ satisfies property $P_i$. Further references on $(V_1,\ldots,V_k)$-partition can be found in \cite{simoes1976joins, bickle2021maximal, bickle2021wiener, bickle2012structural, von2022point, lick1970k}.
	
	A graph $H$ is considered $k$-degenerate if every subgraph of $H$ contains a vertex with degree at most $k$. Specifically, when the property $P_i$ implies that $H[V_i]$ is $p_i$-degenerate for some positive integer $p_i$, refer to \cite{bickle2021maximal, bickle2021wiener}. In the case where $k=2$, Bollob'{a}s and Manvel \cite{Bollob} have presented the following result concerning $(V_1,V_2)$-partition.
	\begin{alphlemma}\label{lem1}{\rm\cite{Bollob}}
		Assume that $H$ is a graph with a maximum degree $\Delta(H)\geq 3$ and a clique number $\omega(H)\leq \Delta(H)$. If $\Delta(H)=p+q$, it can be shown that there exists a $(V_1,V_2)$-partition of the vertex set $V(H)$. This partition satisfies the following properties: $\Delta(H[V_1])\leq p$, $\Delta(H[V_2])\leq q$, $H[V_1]$ is $(p-1)$-degenerate, and $H[V_2]$ is $(q-1)$-degenerate. 
	\end{alphlemma}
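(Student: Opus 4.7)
My plan is to follow Lov\'asz's weighted-edge potential minimization to obtain the maximum-degree bounds, and then exploit the hypothesis $\omega(H) \le \Delta(H)$ to upgrade these bounds to the $(p-1)$- and $(q-1)$-degeneracy conclusions via a local exchange argument: any failure of degeneracy should be pushed into producing a $K_{\Delta+1}$, yielding a contradiction.

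\textbf{Step 1 (max-degree bounds).} Among all partitions $V(H) = V_1 \sqcup V_2$, fix one minimizing
\[
\Phi(V_1, V_2) \;=\; q\cdot |E(H[V_1])| \;+\; p\cdot |E(H[V_2])|.
\]
Write $d_i(v) = |N_H(v) \cap V_i|$. Moving a vertex $v\in V_1$ to $V_2$ changes $\Phi$ by $p\,d_2(v) - q\,d_1(v) \ge 0$, so $q\,d_1(v) \le p\,d_2(v)$; combined with $d_1(v)+d_2(v) \le \Delta(H) = p+q$ this forces $d_1(v) \le p$, and symmetrically $d_2(u) \le q$ for $u\in V_2$.

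\textbf{Step 2 (rigid structure).} Suppose, toward contradiction, that $H[V_1]$ is not $(p-1)$-degenerate, and let $W \subseteq V_1$ be a minimal set with $\delta(H[W]) \ge p$. Together with $\Delta(H[V_1]) \le p$, this makes $H[W]$ $p$-regular, and the Step 1 inequality is forced to be tight at every $w \in W$: $d_1(w) = p$, $d_2(w) = q$, $\deg_H(w) = \Delta$, and $N_H(w) \cap V_1 \subseteq W$. Consequently, switching any $w \in W$ from $V_1$ to $V_2$ preserves the minimum of $\Phi$, so the new partition is again extremal and Step 1 applies to it too.

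\textbf{Step 3 (producing $K_{\Delta+1}$).} Fix $w \in W$ and let $A := N(w) \cap W$, $B := N(w) \cap V_2$, with $|A| = p$ and $|B| = q$. Chaining $\Phi$-preserving single-vertex switches and applying the Step 1 degree bounds to the resulting extremal partitions should force $A \cup B \cup \{w\}$ to induce a complete graph: every vertex in $A\cup B$ must be adjacent to every other vertex of $A\cup B \cup \{w\}$, else a further exchange would strictly decrease $\Phi$ or violate a Step 1 bound. This yields $K_{p+q+1} = K_{\Delta+1}$, contradicting $\omega(H) \le \Delta(H)$. The symmetric argument with $(V_1,p) \leftrightarrow (V_2,q)$ gives $(q-1)$-degeneracy of $H[V_2]$.

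\textbf{Main obstacle.} The heart of the proof lies in Step 3: converting the tight equality in the switching inequality into full pairwise adjacency. In particular, showing that $B$ is a $q$-clique seems to require a double-swap argument, namely that if $u, u' \in B$ were non-adjacent then some exchange involving $w$, $u$, and possibly a vertex of $A$ would strictly decrease $\Phi$ or produce a vertex violating a Step 1 bound. Tracking the effect on $\Phi$ of these multi-vertex swaps, and dealing with the boundary cases $p = 1$ or $q = 1$ that are permitted by $\Delta(H) \ge 3$, is where the combinatorial work is concentrated.
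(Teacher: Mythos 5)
First, a point of comparison: the paper does not prove this statement at all --- Lemma A is quoted from Bollob\'as and Manvel with a citation --- so there is no in-paper argument to measure yours against, and your proposal must stand on its own. Steps 1 and 2 do stand: they are exactly Lov\'asz's potential argument. Minimizing $\Phi$ gives $\Delta(H[V_1])\le p$ and $\Delta(H[V_2])\le q$, and any failure of $(p-1)$-degeneracy forces a $p$-regular set $W\subseteq V_1$ in which every vertex $w$ satisfies $d_1(w)=p$, $d_2(w)=q$, $\deg_H(w)=\Delta(H)$, with single-vertex switches preserving extremality. All of that is correct.

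The genuine gap is Step 3. The claim that the tightness conditions force $A\cup B\cup\{w\}$ to induce $K_{\Delta+1}$ is the entire content of the theorem beyond Lov\'asz's bound, and the mechanism you offer --- ``a further exchange would strictly decrease $\Phi$ or violate a Step 1 bound'' --- fails as a single local move. Concretely, $A=N(w)\cap W$ need not be a clique for any choice of $w\in W$: when $p=2$ the obstruction $W$ may induce a cycle, so $A$ is a non-adjacent pair, yet moving $w$ to $V_2$ leaves $\Phi$ unchanged and violates no Step 1 inequality (each $a\in A$ ends up with $V_1$-degree $p-1\le p$, and each $u\in B$ satisfies $d_2(u)+1\le q$ in the new extremal partition, which is information, not a contradiction). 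The same applies to non-adjacency inside $B$. So you cannot conclude pairwise adjacency pointwise from one exchange; what is needed is a global argument over the entire obstruction component, typically via a secondary extremality condition (for example, among $\Phi$-minimizers choose one minimizing a count of vertices lying in $p$-regular obstructions of $H[V_1]$ together with $q$-regular obstructions of $H[V_2]$, and show that a switch strictly improves this unless the component plus its outside neighborhood is rigid), or a Brooks-type chaining of many switches with a termination argument whose only failure mode is $K_{\Delta+1}$. Note also that the naive secondary condition ``minimize $|V_1|$'' kills the obstruction on one side only, which is why the clique-number hypothesis must genuinely enter; your sketch never actually invokes it except as the intended final contradiction. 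Your ``Main obstacle'' paragraph correctly locates the difficulty, but as written the proof is incomplete precisely there.
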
	
	As an extension of Lemma \ref{lem1}, Catlin and Lai \cite{Catlin1} later proved the following theorem..
	\begin{alphtheorem}\label{th1}{\rm\cite{Catlin1}}
		Assuming that $H$ is a graph with $\Delta(H)=d\geq 3$ and a clique number $\omega(H)\leq \Delta(H)$, it can be shown that $H$ possesses a $(V_1, V_2)$-partition satisfying the following properties:
		\begin{itemize}
			\item  For $d = 3$, $V_1$ is a maximum independent set and $H[V_2]$ is acyclic.
			\item  For $d = 4$, $H[V_1]$  is a maximum acyclic induced subgraph and $H[V_2]$ is acyclic.
			\item  For $d\geq 5$, $H[V_1]$ is a  maximum acyclic induced subgraph,  $\omega(H[V_2])\leq d-2$, and $\Delta(H[V_2])\leq d-2$.
			
		\end{itemize}
	\end{alphtheorem}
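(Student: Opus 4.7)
My plan is to handle the three regimes $d=3$, $d=4$, and $d\geq 5$ separately. In each case I would choose $V_1$ to be a maximum vertex set having the property required of $H[V_1]$, and then derive the claimed bounds on $V_2:=V(H)\setminus V_1$ from the combination of maximality and a localized exchange argument. The hypothesis $\omega(H)\leq\Delta(H)$ will be used to obstruct any swap that would otherwise reintroduce the forbidden substructure.

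For $d=3$, let $V_1$ be a maximum independent set. Maximality forces every $v\in V_2$ to have $\geq 1$ neighbor in $V_1$, so $\Delta(H[V_2])\leq 2$ and $H[V_2]$ is a disjoint union of paths and cycles. To rule out a cycle $C\subseteq H[V_2]$, define $f\colon V(C)\to V_1$ sending each $v$ to its unique $V_1$-neighbor. If $f$ is non-injective, a single swap enlarges $V_1$; the harder subcase, when $f$ is injective, I would handle by performing an alternating rotation around $C$, using $\omega(H)\leq 3$ to exclude the triangles that would block the exchange.

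For $d=4$, take $V_1$ to be a maximum vertex set with $H[V_1]$ acyclic. Adding any $v\in V_2$ to $V_1$ must create a cycle, so $v$ has $\geq 2$ neighbors in a common tree of $H[V_1]$; hence $\Delta(H[V_2])\leq 2$. To forbid a cycle $C\subseteq H[V_2]$, I would pick an internal vertex $u$ of the tree-path $P_v\subseteq H[V_1]$ joining the two $V_1$-neighbors of some $v\in V(C)$ and verify that $(V_1\setminus\{u\})\cup\{v\}$ again induces a forest of the same size; iterating around $C$ either forces a direct size-increase or produces a $K_5$, contradicting $\omega(H)\leq 4$. For $d\geq 5$, the same maximum-forest choice of $V_1$ yields $\Delta(H[V_2])\leq d-2$. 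For the clique bound, suppose $Q\subseteq V_2$ is a $(d-1)$-clique; then each $v\in Q$ has at most $2$ neighbors outside $Q$, both of which must lie in $V_1$ (to force a cycle on addition). Swapping any $v\in Q$ with one of its $V_1$-neighbors preserves $|V_1|$ and acyclicity while rendering another $v'\in Q$ directly addable, contradicting maximality.

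The main obstacle will be ruling out cycles in $H[V_2]$ in Cases $d=3$ and $d=4$, since the degree bound alone only guarantees $\Delta(H[V_2])\leq 2$, not acyclicity. Closing this gap requires a carefully orchestrated rotation around the putative cycle $C$ which simultaneously preserves the independence or forest structure of $V_1$, strictly enlarges $|V_1|$ (or strictly decreases a suitable potential function, e.g.\ the number of edges in $E(V(C),V_1)$), and leverages $\omega(H)\leq d$ to prevent the new configuration from recreating the forbidden substructure. The $d\geq 5$ case is comparatively straightforward because the looser condition $\omega,\Delta\leq d-2$ on $H[V_2]$ leaves enough slack for a one-step swap, but the $d\in\{3,4\}$ cases demand a genuinely global argument along $C$.
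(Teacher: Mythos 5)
First, a point of comparison: the paper does not prove this statement at all. It is Theorem~\ref{th1}, quoted verbatim from Catlin and Lai \cite{Catlin1} as background, so there is no in-paper proof to measure your attempt against; it must be judged on its own. Your high-level plan (take $V_1$ extremal for the required property, deduce the degree bound on $H[V_2]$ from maximality, and kill the remaining bad substructures by exchange arguments protected by $\omega(H)\leq\Delta(H)$) is indeed the standard architecture for results of this Catlin type. The easy halves are fine: every $v\in V_2$ has at least one ($d=3$) or at least two ($d\geq 4$) neighbours in $V_1$, giving $\Delta(H[V_2])\leq d-2$ in all three regimes.

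However, the proposal has genuine gaps, and not only the one you flag yourself. For $d=3$, the claim that non-injectivity of $f$ yields a size-increasing swap fails: if $u,v\in V(C)$ are \emph{adjacent} on $C$ and share the $V_1$-neighbour $w$, then $(V_1\setminus\{w\})\cup\{u,v\}$ is not independent, and $\omega(H)\leq 3$ does \emph{not} forbid the triangle $\{u,v,w\}$ (only $K_4$ is excluded), so this subcase is not disposed of. For $d\geq 5$, the final step of the clique argument is unsubstantiated: after you swap $v\in Q$ for one of its two $V_1$-neighbours, every other $v'\in Q$ \emph{gains} $v$ as a neighbour inside the new forest, so $v'$ now has up to three forest-neighbours and there is no reason it becomes addable; "rendering another $v'\in Q$ directly addable" appears simply false as stated. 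Most importantly, the heart of the theorem --- the rotation around a putative cycle $C\subseteq H[V_2]$ when the assignment to $V_1$ is injective ($d=3$), and the analogous iteration for $d=4$ --- is explicitly deferred ("requires a carefully orchestrated rotation\ldots") rather than carried out. That rotation, with the verification that each exchange preserves independence/acyclicity and strictly decreases a potential, \emph{is} the proof; without it the proposal is an outline of the known strategy, not a proof of the statement.
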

	The result presented below is closely related to Lemma \ref{lem1} and Theorem \ref{th1}. It was established by Matamala in \cite{MR2327961}.
	\begin{alphtheorem}\label{thm:mat}{\rm\cite{MR2327961}}
		Assume that $H$ is a graph with  $\Delta(H)\geq 3$ and clique number $\omega(H)\leq \Delta(H)$. If $\Delta(H)= p+q$  then there exists a $(V_1,V_2)$-partition of $V(H)$, such that $H[V_1]$ is a maximum order $(p-1)$-degenerate induced subgraph of  $H$ and $H[V_2]$ is $(q-1)$-degenerate subgraph.
	\end{alphtheorem}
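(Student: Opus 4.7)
The plan is to pick $V_1$ to be a maximum $(p-1)$-degenerate induced subgraph of $H$ and show, using only the extremality of $V_1$, that the complementary set $V_2 := V(H) \setminus V_1$ already induces a $(q-1)$-degenerate graph. The elementary augmentation fact behind everything is this: if $W \subseteq V(H)$ induces a $(p-1)$-degenerate graph and $v \notin W$ has $\deg_W(v) \le p-1$, then $H[W \cup \{v\}]$ is again $(p-1)$-degenerate (peel $v$ first, then peel $W$ by its old degeneracy order). Applied to our maximum $V_1$, this forces $\deg_{V_1}(v) \ge p$ for every $v \in V_2$, and combining with $\deg_H(v) \le \Delta(H) = p+q$ yields $\deg_{V_2}(v) \le q$; in particular $\Delta(H[V_2]) \le q$.

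Next, suppose for contradiction that $H[V_2]$ is not $(q-1)$-degenerate, and let $T$ be its $q$-core, i.e.\ the unique maximum subset with $\delta(H[T]) \ge q$. The bound $\Delta(H[V_2]) \le q$ forces $H[T]$ to be exactly $q$-regular, and for every $v \in T$ one obtains the chain of tight equalities $\deg_T(v) = q$, $\deg_{V_1}(v) = p$, $\deg_H(v) = \Delta(H)$, and $N_H(v) \subseteq V_1 \cup T$. Every vertex of $T$ is therefore a maximum-degree vertex whose neighbourhood splits exactly $p$-to-$q$ between $V_1$ and $T$, and the hypothesis $\omega(H) \le \Delta(H)$ is what prevents $\{v\} \cup N_H(v)$ from forming a $(\Delta+1)$-clique.

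To turn this rigid local picture into a contradiction, I would refine the choice of $V_1$ by a secondary extremal condition (say, minimising $|T|$) and examine the swap $V_1' := V_1 \cup \{v\} \setminus \{u\}$ for $v \in T$ and $u \in N(v) \cap V_1$. Since $v$ has exactly $p-1$ neighbours in $V_1'$, peeling $v$ first shows $V_1'$ is again $(p-1)$-degenerate of the same size, hence another maximum $(p-1)$-degenerate set. A case analysis of the new $q$-core $T' \subseteq V_2' := V_2 \cup \{u\} \setminus \{v\}$ then splits into: either $u \notin T'$, in which case $T' \subseteq V_2 \setminus \{v\}$ has $\delta \ge q$ and so lies inside $T \setminus \{v\}$, giving $|T'| < |T|$ and contradicting the secondary minimality; or $u \in T'$, which forces $\deg_{V_2}(u) \ge q+1$ and therefore $\deg_{V_1}(u) \le p-1$. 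The main obstacle is closing off the second case: I expect to iterate the swap along $T$, exploit the clique bound $\omega(H) \le \Delta(H)$ to preclude the rigid configurations that would otherwise block the argument, and ultimately apply Lemma \ref{lem1} to the subgraph $H[T \cup N_{V_1}(T)]$ to extract either a strict augmentation of $V_1$ or a strictly smaller $q$-core, in both cases contradicting the extremality of the chosen $V_1$.
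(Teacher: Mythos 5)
The paper offers no proof of this statement to compare against: Theorem~\ref{thm:mat} is quoted from Matamala's paper \cite{MR2327961} purely as background, so your attempt can only be judged on its own. The first half of your proposal is correct and is the standard opening move: the augmentation fact for degenerate sets is right, maximality of $V_1$ does force $\deg_{V_1}(v)\ge p$ for every $v\in V_2$, hence $\Delta(H[V_2])\le q$, and if $H[V_2]$ fails to be $(q-1)$-degenerate its $q$-core $T$ is indeed $q$-regular with every $v\in T$ satisfying $\deg_{V_1}(v)=p$, $\deg_H(v)=\Delta(H)$ and $N_H(v)\subseteq V_1\cup T$.

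The genuine gap is everything after that, and it is the heart of the theorem. The extremal obstruction is $K_{\Delta+1}$ (there $V_2$ induces a $q$-regular $K_{q+1}$ and no swap helps), so the whole content of the proof is showing that if \emph{every} swap $V_1'=V_1\cup\{v\}\setminus\{u\}$ fails to improve the situation, then $N_{V_1}(v)\cup (N(v)\cap T)\cup\{v\}$ is forced to be a clique on $\Delta(H)+1$ vertices, contradicting $\omega(H)\le\Delta(H)$. Your sketch stops exactly at this point: the case $u\in T'$ is left open with ``I expect to iterate the swap,'' no argument is given that the iteration terminates or that the clique hypothesis is actually reached, and the proposed secondary criterion (minimising $|T|$) is never shown to decrease. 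The appeal to Lemma~\ref{lem1} applied to $H[T\cup N_{V_1}(T)]$ does not close the gap either: that lemma produces a fresh two-part partition of a \emph{subgraph} with both parts merely degree/degeneracy bounded, and there is no mechanism in your sketch for converting such a partition into either a strictly larger $(p-1)$-degenerate set in $H$ or a smaller $q$-core; moreover its hypothesis $\omega\le\Delta$ would have to be verified for that subgraph, which is essentially the clique analysis you have skipped. As written, the proposal is a correct setup plus an honest admission that the decisive step is missing.
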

	In an extension of Brooks' Theorem, Catlin demonstrated that any graph $H$ with $\Delta(H)\geq 3$ and without $K_{\Delta(H)+1}$ as a subgraph can be colored with $\Delta(H)$ colors such that one of the color classes forms a maximum independent set \cite{Catlin}. Let $\G$ be a family of graphs. We define $H$ as $\G$-free if it does not contain any subgraph isomorphic to $G$ for every $G\in \G$. As an analogy to Catlin's result, the author and Taherkhani \cite{rowshan2022catlin} established the following result:
	\begin{alphtheorem}\label{2th}\cite{rowshan2022catlin}
		Let $d_1,\ldots,d_k$ be $k$  positive integers. Assume that $G_1,\ldots,G_k$
		are  connected graphs with minimum degrees  $d_1,\ldots,d_k$, respectively, and $H$ is a connected graph with maximum degree
		$\Delta(H)$ where $\Delta(H)=\sum_{i=1}^{k}d_k$. Assume that $G_1,G_2,\ldots,G_k$,
		and $H$ satisfy the following conditions:
		\begin{itemize}
			\item If $k=1$, then $H$ is not isomorphic to  $G_1$.
			\item If  $G_i$ is isomorphic to $K_{d_i+1}$ for each $1\leq i\leq k$, then $H$ is not isomorphic to $K_{\Delta(H)+1}$.
			\item If  $G_i$ is isomorphic to $K_{2}$ for each $1\leq i\leq k$, then $H$ is neither an odd cycle nor a complete graph.
		\end{itemize}
		Then, there is a partition of vertices of $H$ to $V_1,\ldots,V_k$
		such that each $H[V_i]$ is $G_i$-free and moreover one of  $V_i$s  can be chosen in a way that $V_i$ has maximum possible size  such that for which we have $H[V_i]$  be a  $G_i$-free subgraph in $H$. 
	\end{alphtheorem}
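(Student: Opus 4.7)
The plan is to prove the theorem by induction on $k$, with the base case $k=1$ following almost directly from the degree hypothesis. For $k=1$ one takes $V_1 := V(H)$ and must show that $H$ is itself $G_1$-free; indeed any embedded copy of $G_1$ in $H$ would satisfy $d_1 \leq \deg_{G_1}(v) \leq \deg_H(v) \leq \Delta(H) = d_1$ at each of its vertices $v$, which forces all neighbors of $v$ in $H$ to lie inside the copy and every vertex to have degree exactly $d_1$ in both $G_1$ and $H$. Connectivity of $H$ and a degree-count then yield $H \cong G_1$, contradicting the hypothesis; hence $H$ is $G_1$-free and $V(H)$ is trivially a maximum $G_1$-free set.

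For the inductive step, choose $V_1$ to be a maximum $G_1$-free induced subgraph of $H$. For each $v \notin V_1$, maximality forces $H[V_1 \cup \{v\}]$ to contain a copy of $G_1$ that uses $v$; since $\delta(G_1) = d_1$, $v$ has at least $d_1$ neighbors inside $V_1$. Consequently the remainder $H' = H[V(H)\setminus V_1]$ satisfies $\Delta(H') \leq \Delta(H) - d_1 = \sum_{i=2}^{k} d_i$. I would then apply the inductive hypothesis to each connected component of $H'$ with the list $G_2,\ldots,G_k$, and concatenate the resulting parts to obtain $V_2,\ldots,V_k$. Setting the decomposition as $(V_1,V_2,\ldots,V_k)$ gives the desired partition, with $V_1$ already maximum $G_1$-free by construction.

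The main obstacle is verifying that each connected component $C$ of $H'$ actually satisfies the hypotheses needed to invoke induction. There are two distinct sources of trouble. First, the inductive hypothesis requires $\Delta(C) = \sum_{i=2}^{k} d_i$, but only $\Delta(C) \leq \sum_{i=2}^{k} d_i$ is guaranteed; I would handle this by strengthening the inductive statement to allow $\Delta \leq \sum d_i$ (the forcing argument used above only needs $\delta(G_i) = d_i$, not tightness of the degree sum), or equivalently by shrinking some $d_i$ to match. Second, and harder, one must rule out $C$ being one of the Brooks-type obstructions for the family $G_2,\ldots,G_k$: if every $G_i \cong K_{d_i+1}$ then $C$ could a priori be $K_{\sum_{i=2}^k d_i + 1}$, and if every $G_i \cong K_2$ then $C$ could be an odd cycle or a complete graph. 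To evade these, I would perturb the initial maximum $V_1$ by swapping one carefully chosen vertex on the boundary between $V_1$ and $C$, using the rigid structure of these exceptional components to ensure the swap does not create a copy of $G_1$ inside the new $V_1$. The delicate point is arranging the swap so that $|V_1|$ is not decreased; this will require exploiting the fact that if $V_1$ is truly maximum then every boundary vertex has exactly $d_1$ neighbors in $V_1$ lying on a prescribed copy of $G_1$, and this in turn constrains the local structure of $C$ enough to perform a balanced exchange. Handling this perturbation while preserving maximality of $V_1$ is the step I expect to be the main technical obstacle.
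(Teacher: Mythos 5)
First, a point of reference: this statement is Theorem~\ref{2th}, which the paper quotes from \cite{rowshan2022catlin} and does not prove; there is no in-paper proof to compare your argument against, so I can only assess the proposal on its own terms.

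Your outline follows the natural strategy for such Brooks--Catlin-type results (fix a maximum $G_1$-free set $V_1$, use $\delta(G_1)=d_1$ and maximality to force every outside vertex to have at least $d_1$ neighbours in $V_1$, then recurse on $H[V\setminus V_1]$), and the base case $k=1$ is correct. But the proposal has a genuine gap exactly where the theorem's content lies: the treatment of the exceptional components of $H'=H[V\setminus V_1]$. You correctly identify that a component $C$ with $\Delta(C)=\sum_{i=2}^k d_i$ may be $K_{\Delta(C)+1}$ (all $G_i$ complete), an odd cycle or complete graph (all $G_i\cong K_2$), or --- a case you omit --- isomorphic to $G_2$ itself when only one graph remains ($k-1=1$). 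Your proposed remedy is ``perturb $V_1$ by swapping one carefully chosen boundary vertex,'' but you never exhibit the swap, never verify that the new $V_1$ remains $G_1$-free and of the same size, and never show that the swap actually destroys the exceptional component rather than merely relocating it or creating a new one in a different component. This exchange argument is the entire difficulty of the theorem (it is the analogue of the hardest step in Catlin's strengthening of Brooks' theorem), and acknowledging it as ``the main technical obstacle'' does not discharge it. A complete proof typically requires a global extremal choice of $V_1$ (e.g.\ among all maximum $G_1$-free sets, minimize the number of exceptional components of the complement, or minimize $|E(H[\overline{V_1}])|$, in the spirit of the potentials $(P1)$, $(P2)$ used in the proof of Theorem~\ref{M.th} of this paper) followed by a careful analysis showing that an exceptional component would contradict that extremal choice; none of this machinery appears in your sketch.

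A secondary, fixable inaccuracy: to handle components with $\Delta(C)<\sum_{i=2}^k d_i$ you propose ``strengthening the inductive statement to allow $\Delta\le\sum d_i$,'' but the statement with strict inequality is not literally the same theorem (its exceptional cases differ), so this is not a clean strengthening. The standard repair is different: when $\Delta(C)\le\sum_{i=2}^k d_i-1$, the Lov\'asz degree-partition lemma already yields parts $V_i$ with $\Delta(C[V_i])\le d_i-1<\delta(G_i)$, which are automatically $G_i$-free, so induction is only ever needed in the tight case $\Delta(C)=\sum_{i=2}^k d_i$. With that adjustment the reduction is sound, but the exchange argument above remains the missing core of the proof.
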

	Also the author and Taherkhani \cite{yas} established the following result:
	\begin{alphtheorem} \label{3th}\cite{yas}
		Suppose that $H$ is a graph  $\omega(H)\leq \Delta(H)-1$. Let $k\geq 2$ be a positive integer.
		Assume  that  $p_1\geq p_2\geq\cdots\geq p_k\geq 2$  are  $k$ positive integers and  $\sum_{i=1}^k p_i=\Delta(H)-1+k$. If $p_1+p_2\geq 7$, then
		there exists a partition of $V(H)$ into $V_1,V_2,\ldots, V_k$
		such that for each $1\leq i\leq k$, $H[V_i]$ is $K_{p_i}$-free.
	\end{alphtheorem}
		\begin{alphtheorem} \label{4th}\cite{yas}
		Assume that $H$  is a  graph with  $\Delta(H)\geq 6$ and  clique number $\omega(H)$ where $4\leq \omega(H)\leq \Delta(H)-2$.
		Denote  $\omega(H)=p$ and  $\Delta(H)+1-p=q$.
		Then there exists  $V_1\subseteq V(H)$ such that $V_1$ is a maximum $K_p$-free subset of $H$,
		and $H[V\setminus V_1] $ is $K_q$-free.	
	\end{alphtheorem}	
	Consider a connected graph $H=(V, E)$ with a maximum degree $d \geq 3$, which is distinct from $K_{d+1}$. Let $k \geq 2$ be a positive integer, and let $p_1,\ldots, p_k \geq 0$ be $k$ integers. We define $H$ as $(p_1, \ldots , p_k)$-partitionable if there exists a partition of $V(H)$ into sets $V_1,\ldots, V_k$ such that $H[V_i]$ is $p_i$-degenerate for $i \in [k]$. Abu-Khzam, Feghali, and Heggernes have established the following two results concerning $(p_1, \ldots , p_k)$-partitionable graphs.
	\begin{alphtheorem}{\rm\cite{abu2020partitioning}}
		Suppose that $H = (V, E)$ is a connected graph with  $\Delta(H)=d \geq 3$ distinct from $K_{d+1}$. For all integers $k \geq 2$ and $1\geq p_1,\ldots, p_k \geq 0$, such that $\sum_{i=1}^{k}p_i\geq d-k$, a $(p_1,\ldots, p_k)$-partition  of $H$ can be found in $O(|V|+ |E|)$-time.
	\end{alphtheorem}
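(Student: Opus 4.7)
My plan is to derive this algorithmic theorem by extending the linear-time approach to Brooks' theorem, of which it is a generalization: the specialization $k=d$ and $p_1=\cdots=p_k=0$ recovers the existence (and linear-time computability) of a proper $d$-colouring of a connected non-$K_{d+1}$ graph. Rewrite the hypothesis as $\sum_{i=1}^k (p_i+1)\geq d$. The classical Lovász condition for partitioning into parts with $\Delta(H[V_i])\leq p_i$ is $\sum(p_i+1)\geq d+1$, so the assumption furnishes exactly one unit less of slack than Lovász, and it is precisely the Brooks-type hypothesis $H\neq K_{d+1}$ that compensates for this deficit.

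\emph{Step 1: existence via a weighted potential.} I would first establish existence by choosing a partition $(V_1,\ldots,V_k)$ that minimizes the potential $\Phi=\sum_{i=1}^k |E(H[V_i])|/(p_i+1)$. A single-vertex swap $v\in V_j\to V_i$ changes $\Phi$ by $|N(v)\cap V_i|/(p_i+1)-|N(v)\cap V_j|/(p_j+1)$, and minimality forces $|N(v)\cap V_j|\leq (p_j+1)\deg(v)/\sum_i(p_i+1)$. When $\sum(p_i+1)\geq d+1$ this already yields $\Delta(H[V_j])\leq p_j$, which is stronger than $p_j$-degeneracy. In the tight case $\sum(p_i+1)=d$, one tightens at a vertex possessing two non-adjacent neighbours, whose existence is guaranteed by $H\neq K_{d+1}$; a local recolouring along the resulting structure forces a strict decrease of $\Phi$ whenever the partition is not yet $p_i$-degenerate on every class, completing the existence argument.

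\emph{Step 2: linear-time realization via reverse-BFS ordering.} Mirroring Skulrattanakulchai's linear-time Brooks algorithm, I would compute in $O(|V|+|E|)$ an ordering $v_1,\ldots,v_n$ such that each $v_i$ with $i<n$ has at least one later neighbour, and therefore at most $d-1$ earlier neighbours. If some vertex has degree strictly less than $d$, root BFS at that vertex and reverse the BFS order; otherwise $H$ is $d$-regular and, using $H\neq K_{d+1}$, locate (in linear time) a vertex $r$ with two non-adjacent neighbours $u_1,u_2$, order $u_1,u_2$ first, $r$ last, and fill the interior by reverse BFS on $H-\{u_1,u_2\}$ rooted at $r$. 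Then process the order forwards: assign both $u_1$ and $u_2$ to class $1$ (legal since $u_1u_2\notin E(H)$) and, for each subsequent $v_i$ with $i<n$, apply the greedy pigeonhole step — $v_i$ has at most $d-1$ already-coloured neighbours while $\sum_c(p_c+1)=d$, so some class $c$ has at most $p_c$ of them, and $v_i$ is assigned there. Maintaining, for every vertex and every of the $k$ classes, a running count of already-coloured neighbours costs $O(1)$ per edge traversal, for a total of $O(|V|+|E|)$.

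\emph{Main obstacle.} The critical step is the last vertex $r$, which carries exactly $d$ already-coloured neighbours, so pigeonhole fails by one slot in the worst case. The rescue uses that $u_1,u_2\in N(r)$ are both in class $1$ and non-adjacent in $H$: if no class accommodates $r$, a Kempe-type swap re-colouring $u_1$ into another class $c^*$ becomes legal precisely because $u_1$ sits at position $1$ of the ordering and therefore has no earlier neighbour whose degeneracy constraint could be violated. The delicate point is to verify that the swap preserves $p_c$-degeneracy of \emph{every} class for all later vertices, not merely at $r$; this reduces to re-examining only the neighbours of $u_1$, so the repair fits inside the overall $O(|V|+|E|)$ budget. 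When all $p_i=0$ the whole procedure collapses onto Skulrattanakulchai's Brooks algorithm, giving a natural induction base and a sanity check for the degenerate generalization.
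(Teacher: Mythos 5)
First, note that the paper you are reading does not actually prove this statement: it is quoted as background directly from \cite{abu2020partitioning}, so there is no in-paper argument to compare yours against, and your proposal must stand on its own. Your Step 2 has the right skeleton (an ordering in which every vertex except the root has a later neighbour, followed by a greedy pigeonhole assignment using $\sum_c (p_c+1)\geq d$), and you correctly isolate the only hard configuration: $H$ is $d$-regular, $\sum_c(p_c+1)=d$ exactly, and the root $r$ arrives with its $d$ already-assigned neighbours split as exactly $p_c+1$ per class. But your rescue at that point is where the proof genuinely breaks. Placing the non-adjacent $u_1,u_2\in N(r)$ together in class $1$ creates no slack unless $p_1=0$: the bad distribution $n_1=p_1+1\geq 2$ is perfectly compatible with $u_1$ and $u_2$ both lying in class $1$, so nothing forces a free class for $r$. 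Your proposed repair --- re-colouring $u_1$ into some class $c^*$ so that $r$ fits into class $1$ --- is not shown to preserve the invariant. The certificate you maintain is ``each vertex has at most $p_c$ earlier neighbours inside its own class''; moving $u_1$ into $c^*$ can push a later neighbour $w$ of $u_1$ that already had exactly $p_{c^*}$ earlier class-$c^*$ neighbours up to $p_{c^*}+1$, and repairing $w$ by moving it in turn can cascade through the graph with no a priori bound. Your sentence claiming the repair ``reduces to re-examining only the neighbours of $u_1$'' is precisely the assertion that this cascade stops after one step, and you give no argument for it; that assertion is the actual content of the theorem, not a verifiable detail.

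There are two further gaps. For every interior vertex to have a later neighbour you need $H-\{u_1,u_2\}$ to be connected, which requires the structural lemma underlying Brooks' theorem (choosing $r,u_1,u_2$ via a $3$-connected versus $2$-cut case analysis) together with a separate treatment of graphs having cut vertices; neither appears in your outline, and without connectivity a component of $H-\{u_1,u_2\}$ avoiding $r$ contributes a second ``last'' vertex with up to $d$ earlier neighbours. Finally, in Step 1 the claim that in the tight case a ``local recolouring forces a strict decrease of $\Phi$'' is stated without proof; since minimality of $\Phi$ only yields $|N(v)\cap V_j|\leq p_j+1$ when $\sum_i(p_i+1)=d$, the existence statement in the one case that matters rests entirely on that unproved sentence.
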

	\begin{alphtheorem}{\rm\cite{abu2020partitioning}}
		For every integer $d \geq 5$ and every pair of non-negative integers $(p, q)$, so that $(p, q)\neq (1, 1)$
		and $p+q= d-3$, deciding whether a graph with maximum degree $d$ is $(p, q)$-partitionable is NP-complete.
	\end{alphtheorem}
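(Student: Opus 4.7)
The plan has the standard two components of an NP-completeness proof. Membership in NP is immediate: given a candidate partition $(V_1,V_2)$ of $V(H)$, one verifies in linear time that $H[V_1]$ is $p$-degenerate and $H[V_2]$ is $q$-degenerate by greedy peeling, repeatedly deleting a vertex of degree at most $p$ (respectively $q$) until the corresponding subgraph is empty, or reporting failure.

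For the hardness direction I would reduce from a standard NP-complete problem such as NAE-$3$-SAT with bounded occurrences or $3$-colouring of bounded-degree graphs, proceeding in two stages. First I would establish hardness in the base case $d=5$, $(p,q)=(0,2)$, where one must partition a graph of maximum degree $5$ into an independent set and a $2$-degenerate subgraph; this is the smallest admissible parameter set and sits just under the tractable threshold $p+q\ge d-2$. The base-case reduction constructs, for each variable $x$ of the input formula, a \emph{variable gadget} whose valid $(0,2)$-partitions fall into exactly two symmetric types encoding $x=\mathrm{TRUE}$ and $x=\mathrm{FALSE}$; rigidity is enforced by embedding a dense substructure (a near-clique) that cannot be entirely absorbed on the $0$-degenerate side. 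For each clause $C$, a \emph{clause gadget} is attached to the three literal-vertices so that the gadget admits a valid partition if and only if at least one literal is assigned to the TRUE side. Auxiliary pendant and pad vertices bring every vertex up to degree exactly $5$ while preserving the intended semantics.

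Second, I would lift the base-case hardness to an arbitrary admissible triple $(d,p,q)$ by a uniform degree-boosting construction. From a hard instance $H_0$ for $(d_0,p_0,q_0)=(5,0,2)$ one produces $H$ by attaching, at each vertex $v\in V(H_0)$, a small auxiliary pad gadget that (i) raises the degree of $v$ up to $d$, and (ii) internally consumes precisely $p-p_0$ units of $p$-degeneracy and $q-q_0$ units of $q$-degeneracy, so that the side on which $v$ lies in $H_0$ is forced to coincide with the side on which $v$ lies in $H$. When $\max(p,q)\ge 2$ the pad can itself be anchored by a small near-clique that cannot fit on the tighter side; the exclusion of the pair $(p,q)=(1,1)$ is exactly the configuration in which no such anchoring dense subgraph can be placed on \emph{either} side, so no variant of the construction survives, which is consistent with the statement's hypothesis.

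The principal obstacle is the gadget analysis. Because $p$-degeneracy is a hereditary but \emph{global} property --- a vertex of high degree is harmless provided no induced subgraph of the part is too dense --- one cannot argue locally that a gadget admits only the intended partitions; every induced subgraph of every gadget, together with its interface vertices, must be accounted for simultaneously. Maintaining maximum degree exactly $d$ while preserving this rigidity, and making the entire argument uniform over the two-parameter family $(p,q)$ with $p+q=d-3$ and $(p,q)\neq(1,1)$, is the technical core of the reduction.
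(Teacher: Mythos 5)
You should first note that the paper you are working from does not prove this statement at all: it is quoted as background from Abu-Khzam, Feghali and Heggernes, so there is no in-paper proof to compare your argument against. Judged on its own terms, your proposal is a plausible outline of how such a hardness result is typically obtained, and the NP-membership half is fine ($p$-degeneracy of a given part is certified in linear time by greedy peeling). But the hardness half is a plan, not a proof, and the gaps sit exactly at the load-bearing points. The variable and clause gadgets are never constructed; the claims that the variable gadget admits ``exactly two symmetric types'' of valid $(0,2)$-partitions and that the clause gadget is partitionable if and only if some literal is TRUE are asserted with no object on which to check them. Since $p$-degeneracy is a global hereditary property --- as you yourself observe --- these rigidity claims are precisely the part of the argument that cannot be waved at.

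The lifting step is the weakest link. ``Internally consumes precisely $p-p_0$ units of $p$-degeneracy'' is not a well-defined operation: degeneracy budgets do not add across an attached pad, and what one actually needs is a gadget $P_v$ such that in \emph{every} valid $(p,q)$-partition of the boosted graph, the part containing $v$, restricted to the original graph, is forced to be $p_0$-degenerate (respectively $q_0$-degenerate); that statement requires its own argument about elimination orderings and is nowhere supplied. You also have not checked coverage of the two-parameter family: from the base $(5,0,2)$ your pads can only reach targets with $p\ge 0$ and $q\ge 2$, so you must also run the symmetric base $(5,2,0)$ to reach pairs with $q\in\{0,1\}$, and then verify that the only pairs missed by both bases are $(0,0)$, $(0,1)$, $(1,0)$ (which force $d\le 4$, outside the theorem) and $(1,1)$ (excluded by hypothesis) --- an arithmetic check you do not perform. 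Finally, your explanation of the exclusion of $(1,1)$ is a post hoc rationalization rather than an argument: the theorem excludes that pair because its complexity is not settled by the reduction, and your construction says nothing about it either way. As it stands, the proposal identifies the right general shape of a reduction but establishes none of the claims that make it work.
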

	As a related result of Lemma \ref{lem1}, Theorem \ref{th1}, Theorem \ref{thm:mat},  and Theorem \ref{2th}, in this article, we prove the  following theorem.
	
	\begin{theorem}[Main result]\label{mth1} Suppose that $H = (V, E)$ is a connected graph with maximum degree $\Delta(H)\geq 5$  and  $H$ is $K_{\Delta(H)+1}\setminus e$-free. Suppose that $p$ and $q$  are two positive integers, such that $p\geq 2, q\geq 4$ and  $\Delta(H)+1=p+q$. Set $\G$ as a collection of   graphs with minimum degree at least $p-1$. Then there exists a $(V_1,V_2)$-partition of $V(H)$, such that $H[V_1]$  is a maximum order $\G$-free induced subgraph of $H$,   $\Delta(H[V_2])\leq q$, and  either $H[V_2] $ is $K_q$-free subgraph or its $q$-cliques  are disjoint. 	
	\end{theorem}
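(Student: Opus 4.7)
The plan is to pick $V_1 \subseteq V(H)$ of maximum size such that $H[V_1]$ is $\G$-free (the empty set shows such a $V_1$ exists, and $|V(H)|$ bounds its size) and set $V_2 = V(H) \setminus V_1$. The required conclusions that $H[V_1]$ is $\G$-free and that $V_1$ has the maximum possible size are then immediate by construction, so everything hinges on deducing the properties of $V_2$ from the maximality of $V_1$ together with the $K_{\Delta(H)} \setminus e$-free hypothesis.

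The first step is the degree bound $\Delta(H[V_2]) \leq q$. For any $v \in V_2$, if $|N_H(v) \cap V_1| \leq p - 2$ then $H[V_1 \cup \{v\}]$ is still $\G$-free: any forbidden copy must contain $v$ (because $V_1$ itself is $\G$-free), yet every $G \in \G$ has minimum degree at least $p - 1$ while $v$ has fewer than $p - 1$ neighbors in $V_1 \cup \{v\}$. This contradicts the maximality of $|V_1|$, so $|N_H(v) \cap V_1| \geq p - 1$ and therefore $\deg_{H[V_2]}(v) \leq \Delta(H) - (p - 1) = q$. Next, assume for contradiction that two distinct $q$-cliques $K_1, K_2$ of $H[V_2]$ share a vertex $v$. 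Since $\deg_{H[V_2]}(v) \leq q$ with $q - 1$ neighbors in each clique, inclusion-exclusion forces $|K_1 \cap K_2| \geq q - 1$, hence $|K_1 \cap K_2| = q - 1$. Writing $S = K_1 \cap K_2$, $\{a\} = K_1 \setminus K_2$, $\{b\} = K_2 \setminus K_1$, every $u \in S$ is adjacent in $V_2$ to $(S \cup \{a,b\}) \setminus \{u\}$, which saturates all prior inequalities: $\deg_{H[V_2]}(u) = q$, $\deg_{H[V_1]}(u) = p - 1$, $\deg_H(u) = \Delta(H)$, and $u$ has no further neighbor in $V_2$.

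Finally, the contradiction must come from the $K_{\Delta(H)} \setminus e$-free hypothesis. The set $S \cup \{a,b\}$ has $q + 1$ vertices and induces in $H$ either $K_{q+1}$ (if $ab \in E$) or $K_{q+1}$ with the edge $ab$ deleted. When $p = 2$ we have $q + 1 = \Delta(H)$, and in both subcases $H$ contains a copy of $K_{\Delta(H)} \setminus e$ immediately, contradicting the hypothesis. The hard part will be $p \geq 3$, where the near-clique on $S \cup \{a,b\}$ falls short of $\Delta(H)$ vertices by $p - 2$; the idea is to locate $p - 2$ common $V_1$-neighbors of $S \cup \{a,b\}$ that are pairwise adjacent, assembling a forbidden copy of $K_{\Delta(H)} \setminus e$. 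The rigidity proved above gives the hook, since every $u \in S$ has exactly $p - 1$ neighbors in $V_1$, and any attempt to move $u$ into $V_1$ must create some $G \in \G$ on $\{u\} \cup (N_H(u) \cap V_1)$; propagating this constraint across all of $S \cup \{a,b\}$, possibly combined with swaps of pairs $(u, w)$ with $u \in S$ and $w \in N_H(u) \cap V_1$ in the spirit of the proofs of Lemma~\ref{lem1}, Theorem~\ref{thm:mat} and Theorem~\ref{2th}, should either produce the required $K_{\Delta(H)} \setminus e$ or exhibit a strictly larger $\G$-free induced subgraph. Executing this $p \geq 3$ argument will be the principal technical obstacle.
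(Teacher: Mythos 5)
Your construction of $V_1$, the degree bound $|N(v)\cap V_1|\ge p-1$ for every $v\in V_2$, and the inclusion--exclusion step showing that two intersecting $q$-cliques of $H[V_2]$ must overlap in exactly $q-1$ vertices and hence span a copy of $K_{q+1}\setminus e$ in $H[V_2]$ all match the paper (these are parts (I) and (II) of its Theorem~\ref{M.th}), and your $p=2$ case closes correctly since then $q+1=\Delta(H)$. But the argument stops exactly where the real work begins: for $p\ge 3$ you only assert that one should be able to locate $p-2$ further pairwise-adjacent common neighbours in $V_1$ and assemble a $K_{\Delta(H)}\setminus e$, and you explicitly leave this unexecuted. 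That step is not a routine finish --- it is essentially the entire content of the paper's Section~2 (part (III) of Theorem~\ref{M.th}, Claims 1.1--1.8).

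Concretely, two ingredients are missing. First, bare maximality of $|V_1|$ does not suffice to run the exchange argument you gesture at: the paper must choose, among all maximum $\G$-free sets, one whose complement minimizes first the number of copies of $K_{q+1}\setminus e$ and then the number of edges (conditions (P1) and (P2)); without these tie-breakers the swaps $S\mapsto (S\cup\{v\})\setminus\{y\}$ produce no contradiction. Second, the fact that the $p-1$ neighbours in $V_1$ of a ``base'' vertex of the near-clique induce a clique is obtained only after proving that the component of $H[S\cup\{v\}]$ containing $v$ is a single $(p-1)$-regular copy of $R$ (Claim~\ref{c1}), and then iterating a chain of swaps $S_0,S_1,\dots,S_\ell$ through successive copies $G_0,\dots,G_\ell$ of $K_{q+1}\setminus e$ until two of their bases intersect; only at that point does rigidity force $R\cong K_p$ and yield $K_{\Delta(H)}\setminus e\subseteq H[(R_0\setminus\{y_0\})\cup V(G_0)]$. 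None of this is present or clearly forced by what you have written, so the proposal has a genuine gap at its principal step.
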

	By utilizing induction on $k$, we can demonstrate that the following result holds as a generalization of Theorem \ref{mth1}. 	
	\begin{corollary}\label{cor1}
		Assume that $H = (V, E)$ is a connected graph with a maximum degree $\Delta(H)\geq 5$. Let $p_1\geq p_2\geq\cdots \geq p_{k-1}\geq 2$ and $p_k\geq 4$ be $k$ positive integers such that $\sum_{i=1}^k p_i=\Delta(H)-1+k$. Furthermore, let $\G_i$ denote a collection of graphs with a minimum degree of at least $p_i-1$ for each $i\in [k-1]$. If $H$ is free of $K_{\Delta(H)+1}\setminus e$, then there exists a $(V_1,V_2,\ldots, V_k)$-partition of $V(H)$ satisfying the following properties: $H[V_1]$ is a maximum order $\G_1$-free graph, $H[V_i]$ is $\G_i$-free for each $2\leq i\leq k-1$, $\Delta(H[V_k])\leq p_k$, and either $H[V_k]$ is $K_{p_k}$-free or its $p_k$-cliques are disjoint.
	\end{corollary}	
	Theorem \ref{2th} implies that for every graph $H$ with a maximum degree $\Delta(H)\geq 5$ and without $K_{\Delta(H)+1}\setminus e$ as a subgraph, if $\Delta+1=2p$ for some positive integer $p$, then there exists a $K_p$-free $\lceil \frac{\Delta}{p-1}\rceil$-coloring of $H$ such that one of its color classes is a maximum induced $K_p$-free subgraph in $H$. For instance, for $\Delta(H)=9$ and $p=5$, Theorem \ref{2th} guarantees a $K_5$-free $3$-coloring of $H$ such that one of its color classes is a maximum order induced $K_5$-free subgraph in $H$. However, Theorem \ref{mth1} states that there is a $2$-coloring of $H$ such that one of its color classes is a maximum order induced $K_5$-free subgraph in $H$ and $H[V_2]$ is either $K_5$-free or the $5$-cliques of $H[V_2]$ are disjoint. Clearly, the result of Theorem \ref{mth1} is stronger than that of Theorem \ref{2th}. Moreover, if we consider $\G =\C={C_n,~n\geq 3}$, $p=3$, and $q=\Delta(H)-2$, then Theorem \ref{mth1} coincides with Theorem \ref{th1}. It is worth noting that by incorporating some results from Demetres Christofides, Katherine Edwards, and Andrew D. King in \cite{christofides2013note}, if we replace the assumption $q\geq 4$ with $q\geq 2$ in Theorem \ref{mth1}, we obtain the same result, but we lose the maximality of $|V_1|$ in Theorem \ref{mth1}. In other words, we have the following result.

	\begin{lemma}\label{ml1}Consider a connected graph $H$ with $\Delta(H)\geq 7$ that is $K_{\Delta(H)}\setminus e$-free, and let $p$ and $q$ be two integers satisfying $p,q\geq 2$ and $\Delta(H)+1=p+q$. Then there exists a $(V_1,V_2)$-partition of $V(H)$ such that $H[V_1]$ is $K_p$-free, and either $H[V_2]$ is $K_q$-free or its $q$-cliques are disjoint.
	\end{lemma}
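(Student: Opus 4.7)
The plan is to split the argument into the cases $q \geq 3$ and $q = 2$. Since $p, q \geq 2$ and $p + q = \Delta(H) + 1 \geq 6$, these two cases are exhaustive.

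When $q \geq 3$, the conclusion follows directly from Theorem \ref{mth1} applied with $\G := \{K_p\}$. Since $K_p$ has minimum degree $p-1$, this is a valid family, and the theorem supplies a $(V_1,V_2)$-decomposition with $H[V_1]$ being $K_p$-free and $H[V_2]$ either $K_q$-free or with pairwise vertex-disjoint $K_q$-cliques. The additional maximality of $V_1$ guaranteed by Theorem \ref{mth1} is stronger than what the present lemma requires and may simply be discarded.

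The substantive content is the case $q = 2$, in which $p = \Delta(H) - 1 \geq 4$. Here the target is a partition with $H[V_1]$ free of $K_{\Delta-1}$ and $V_2$ inducing a graph of maximum degree at most $1$, so that its edges are automatically vertex-disjoint. My approach is to exploit the $K_\Delta\setminus e$-freeness of $H$ to extract three local constraints on $K_{\Delta-1}$-cliques: for any such clique $K$, (i) each $v \in K$ has at most $\deg_H(v)-(\Delta-2) \leq 2$ neighbors outside $K$; (ii) each $w \notin K$ has at most $\Delta - 3$ neighbors in $K$, for otherwise $K \cup \{w\}$ would realize $K_\Delta \setminus e$; and (iii) two distinct $K_{\Delta-1}$-cliques share at most $\Delta - 3$ vertices (else their union contains a $K_\Delta$ or $K_\Delta \setminus e$). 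I would then build $V_2$ by a greedy procedure: start from $V_2 = \emptyset$ and, while some $K_{\Delta-1}$-clique $K$ survives in $H[V_1]$, move a carefully chosen vertex $v \in K$ into $V_2$ so that $\Delta(H[V_2]) \leq 1$ is preserved. Because $|K| = \Delta - 1 \geq 4$ and each $v \in K$ contributes at most two edges into $V_2$, this should leave enough flexibility for the required $v$ to exist.

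The main obstacle is the verification of the greedy selection. The dangerous configuration is one in which every $v \in K$ has already accumulated two neighbors in $V_2$, or has its unique $V_2$-neighbor already matched in $H[V_2]$. Ruling out such an obstruction requires a careful counting of edges from $K$ into the current $V_2$ combined with (i)--(iii); in borderline configurations, a local swap that first returns a vertex of $V_2$ to $V_1$ before installing $v$ may be necessary. This structural analysis is the delicate heart of the proof, and where I expect most of the effort to be spent.
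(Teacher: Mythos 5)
Your case split is the same as the paper's, and the $q\geq 3$ branch is handled exactly as the paper handles it: apply Theorem \ref{mth1} (in the paper, via its single-graph version, Theorem \ref{M.th}) with $\G=\{K_p\}$, noting $\delta(K_p)=p-1$. That part is fine.

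The gap is the entire $q=2$ case, which you correctly identify as the substantive content but do not actually prove. Your greedy procedure hinges on the claim that every surviving $K_{\Delta-1}$-clique $K$ in $V_1$ contains a vertex $v$ that can be moved to $V_2$ without violating $\Delta(H[V_2])\leq 1$, and you explicitly concede that ruling out the obstruction (every $v\in K$ already has two neighbours in $V_2$, or one neighbour that is already matched there) is unresolved; the fallback of ``a local swap that first returns a vertex of $V_2$ to $V_1$'' is not specified and, once vertices are returned to $V_1$, termination and the re-emergence of cliques in $V_1$ are no longer controlled. So as written this is a plan, not a proof. The paper avoids this entirely: since $H$ is $K_{\Delta}\setminus e$-free, $\omega(H)\leq \Delta-1$; if $\omega(H)\leq \Delta-2=p-1$ one takes $V_2=\emptyset$, and otherwise $\omega(H)=\Delta-1\geq \tfrac{2}{3}(\Delta+1)$, so Proposition \ref{p1} applies the clique-hitting theorems of Rabern (Theorem \ref{th5}) and of King et al.\ (Theorem \ref{th0}) to extract an \emph{independent} set $S$ meeting every maximum clique; setting $V_2=S$ gives $\omega(H\setminus S)\leq \Delta-2$, i.e.\ $H[V_1]$ is $K_p$-free, and $H[V_2]$ is edgeless, which is the stronger of the two alternatives in the conclusion. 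Those hitting-set theorems are genuinely nontrivial (the second rests on independent-transversal machinery), which is strong evidence that your local counting based only on constraints (i)--(iii) will not close the case without importing such a result. To repair your proof, either complete the greedy/swap analysis in full, or replace the $q=2$ branch by the reduction to an independent set intersecting every maximum clique as in Proposition \ref{p1}.
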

	\section{Vertex Partitions and	Maximum $R$-free Subgraphs}	
	In this section, we prove  Theorem \ref{mth1}  for the case that $\G$ has only one member, say  $R$, where $R$ is a   graph with minimum degree at least $p-1~(p\geq 2)$.
	\begin{theorem}\label{M.th} Assume that $H$  is a connected graph with
		$\Delta(H)=d\geq 5$ and $p$ and $q$ be two positive integers, where $p\geq2,  q\geq 4$ and $d+1=p+q$. Also, let $G\cong K_{q+1}\setminus e$. Suppose that $\F$ consists of $S\subseteq V(H)$ for which:
		\begin{itemize}
			\item  $S$ has the maximum possible size (M-P-size) such that $H[S]$ is $R$-free, in  other words  $S$ is a maximum order $R$-free induced subgraph of $H$.
		\end{itemize}
		Now, for each $S\in \F$ the following statements hold:
		\begin{itemize}
			\item[(I)]: For each vertex  $v\in  \overline{S}$, we have $|N(v)\cap S|\geq p-1$, $\Delta(H[\overline{S} ])\leq q$, and  the induced subgraph $H[S\cup\{v\}]$ has at least one copy  of  $R$. 
			\item[(II)]: Every vertex  $v\in\overline{S} $  lies in either  at most one copy of $G$ in $H\setminus S$ or  a copy of $K_{q+1}$ which is a connected component of $H[\overline{S}]$.
			\item[(III)]:  For any member  $S$ of $\F$ if $\overline{S}$ has a copy of $G$, then  $K_{d+1}\setminus e\subseteq H$.
		\end{itemize}
	\end{theorem}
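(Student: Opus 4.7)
The plan is to prove (I), (II), and (III) in order; I expect (III) to be the main obstacle.

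For (I), I would use maximality directly. If $v\in\overline{S}$ then $S\cup\{v\}$ strictly exceeds $|S|$ and hence cannot be $R$-free, so it contains a copy of $R$, which must pass through $v$ because $H[S]$ itself is $R$-free. Since $\delta(R)\geq p-1$, this gives $|N(v)\cap S|\geq p-1$, and combined with $\deg_H(v)\leq d=p+q-1$ we obtain $|N(v)\cap\overline{S}|\leq q$, so $\Delta(H[\overline{S}])\leq q$.

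For (II), suppose $v\in\overline{S}$ lies in two distinct copies $G_1,G_2$ of $G=K_{q+1}\setminus e$ inside $H[\overline{S}]$, and split on whether $v$ has degree $q$ (or only $q-1$) in some $G_i$, which are the only two degrees realised in $G$. If $\deg_{G_i}(v)=q$ for some $i$, then $V(G_i)\setminus\{v\}$ already exhausts $N_{\overline{S}}(v)$ by (I), so every other copy through $v$ must share this vertex set; unioning the edge sets on this $(q+1)$-vertex set yields $K_{q+1}$, and since every one of its vertices already attains the $\overline{S}$-degree bound $q$, no edge leaves, so the $K_{q+1}$ is a connected component of $H[\overline{S}]$. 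In the remaining case $\deg_{G_i}(v)=q-1$ for both $i$, letting $w_i$ be the non-edge partner of $v$ in $G_i$ and $A_i$ the common $(q-1)$-clique part, a short degree count at any vertex of $A_1\cap A_2$ (nonempty since $q\geq 3$) forces $w_1=w_2$ and $A_1=A_2$, i.e. $G_1=G_2$.

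Part (III) is the heart of the theorem. Let $G'\subseteq H[\overline{S}]$ have vertex set $\{v,w\}\cup A$ with missing edge $vw$. Combining (I) with $\deg_{G'}(a)=q$ for each $a\in A$ pins down $\deg_H(a)=d$, $\deg_{\overline{S}}(a)=q$, and $\deg_S(a)=p-1$. The core tool is a swap: for any $a\in A$ and $s\in N_S(a)$, the set $S'=(S\setminus\{s\})\cup\{a\}$ remains $R$-free, since any new copy of $R$ in $H[S']$ would need $p-1$ neighbors of $a$ in $S\setminus\{s\}$, of which only $p-2$ are available. Hence $S'\in\F$ and satisfies (I) and (II). Applying these to each such $S'$, I plan to show that $N_S(a)$ is independent of $a\in A$, that this common set $Y$ is a $(p-1)$-clique in $H$, and that every $y\in Y$ is adjacent to both $v$ and $w$. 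Then any $(p-2)$-subset $Y'\subseteq Y$ gives $Y'\cup V(G')$ a $d$-vertex set whose induced subgraph is $K_d$ minus exactly the edge $vw$, that is, $K_d\setminus e$.

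The main obstacle is this consolidation. Given $s\in N_S(a_1)\setminus N_S(a_2)$, the swapped set $S'=(S\setminus\{s\})\cup\{a_1\}\in\F$ still contains $G'\setminus\{a_1\}$ in $\overline{S'}$, and chasing (I) and (II) on $S'$ the vertex $s$ (or a cleverly chosen vertex of $N_S(a_2)$ via a second swap) should complete either a new copy of $G$ or a $K_{q+1}$ component in $\overline{S'}$ whose presence violates the rigid degree budget forced by (I) on $S'$. Executing this chain of swaps cleanly, and analogously deriving the cliqueness of $Y$ and the adjacencies $Y\subseteq N(v)\cap N(w)$, is where the real technical work lies.
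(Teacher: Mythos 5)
Your proofs of (I) and (II) are correct and follow the paper's route; your case analysis in (II) is in fact more careful than the paper's one-line degree count. The genuine gap is in (III), and it is not merely unfinished bookkeeping: your plan omits the one idea that makes the paper's argument close. The paper does not prove (III) for an arbitrary $S\in\F$ directly; it first fixes $S$ extremal in $\F$, minimizing the number of copies of $G$ in $H[\overline{S}]$ and, subject to that, minimizing $|E(H[\overline{S}])|$ (its conditions (P1) and (P2)). Every swap in the paper derives its contradiction from one of these two minimality conditions: after a swap produces another member $S_1\in\F$, the displaced vertex is \emph{forced} to lie in a copy of $G$ in $\overline{S_1}$ (else the count of copies of $G$ drops, contradicting (P1)), and the edge count (P2) then forces rigid equalities such as $N(y_0)\cap\overline{S_1}=N(v_0)\cap\overline{S_0}$. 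In your plan, the sentence ``should complete either a new copy of $G$ or a $K_{q+1}$ component in $\overline{S'}$ whose presence violates the rigid degree budget'' has no force as stated: an additional copy of $G$ in the complement of some other member of $\F$ violates nothing --- (I) and (II) are perfectly consistent with it --- so no contradiction is available to rule out $s\in N_S(a_1)\setminus N_S(a_2)$.

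Relatedly, your target structure (a common set $Y=N_S(a)$ for all $a\in A$, forming a $(p-1)$-clique adjacent to both $v$ and $w$) is exactly the endpoint of the paper's chain of Claims \ref{c1}--\ref{c7}, but the paper only reaches it after first proving (Claim \ref{c1}) that for $v$ in the $(q-1)$-clique part of a copy of $G$, the connected component of $H[S\cup\{v\}]$ containing $v$ is a single $(p-1)$-regular copy of $R$ --- and that claim is itself a consequence of (P1)/(P2), not of maximality of $S$ alone. Your swap $(S\setminus\{s\})\cup\{a\}$ does stay in $\F$ for the reason you give, but without the component/regularity claim and without an extremal choice of $S$ you have no control over where copies of $R$ and $G$ reappear after a swap, and I do not see how to conclude $N_S(a_1)=N_S(a_2)$, the cliqueness of $Y$, or $Y\subseteq N(v)\cap N(w)$ for an arbitrary $S\in\F$. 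To repair the argument you would need to (i) restrict to an $S$ minimizing the number of copies of $G$ in $\overline{S}$ and then $|E(H[\overline{S}])|$ (legitimate, since the conclusion of (III) is a statement about $H$ only), and (ii) first establish the uniqueness and $(p-1)$-regularity of the copy of $R$ created by adding a clique vertex of $G$ to $S$; both are substantial steps currently missing from your proposal.
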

	\begin{proof} Suppose that  $S$ be an arbitrary member of $\F$.
		
		{\bf Proof (I)}:Since the size of the set $S$ is maximal, for every vertex $v \in \overline{S}$, the graph $H[S \cup {v}]$ contains at least one copy $R$ that includes the vertex $v$. Therefore, we can conclude that $|N(v) \cap S| \geq p-1$ and $|N(v) \cap \overline{S}| \leq q$ for every vertex $v$.
		
		%========================================================================================
		{\bf Proof (II)}:Assume by contradiction that there exists a vertex $v$ in $\overline{S}$ that is present in at least two copies of $G$ but does not belong to any copy of $K_{q+1}$ in $H[\overline{S}]$. Let $G_1$ and $G_2$ be two copies of $G$ in $H[\overline{S}]$ that contain $v$. Let $X=\{x_1, x_2, \ldots, x_{q-1}\}$ and $X'=\{x'_1, x'_2, \ldots, x'_{q-1}\}$ be the sets of $(q-1)$ vertices from $V(G_1)$ and $V(G_2)$, respectively, such that for each $1\leq i\leq q-1$, $\deg_{H[\overline{S}]}(x_i)=\deg_{H[\overline{S}]}(x'_i)=q$.
		
		If $|X\cap X'|\leq q-3$, then we can conclude that $|N(v)\cap \overline{S}|\geq q+1$, which implies $|N(v)\cap S|\leq p-2$. However, this contradicts part (I) of the result. Therefore, we assume that $|X\cap X'|\geq q-2 \geq 2$. Let $v'$ be a vertex in $X\cap X'$.
		
		Since $G_1\neq G_2$, we can deduce that $|N(v')\cap \overline{S}|\geq q+1$, which implies $|N(v')\cap S|\leq p-2$. However, this contradicts part (I) of the result. Thus, our initial assumption that there exists such a vertex $v$ is false, and the claim holds.\\
		{\bf Proof (III)}:  Suppose that $p\geq 2$ and $q\geq 4$. We take a $S\in \F$
		for which 
		\begin{itemize}
			%\item(P1): $H[S] $  has the least possible number of copies of $R\setminus \{v\}$.
			\item(P1): $H[\overline{S}]$ has the least possible number of copies $G$,
			and, subject to that, 
			\item(P2): $E(H[\overline{S}])$ has the least possible size.
			
		\end{itemize}
		Let's assume that $G'$ is a copy of $G$ in $H[\overline{S}]$. We define $V' = V(G') = \{v'_0, v'_1, \ldots, v'_{q}\}$ such that $G'[V'\setminus \{v'_{q-1}, v'_{q}\}] \cong K_{q-1}$ and $G' \cong K_{q-1}\oplus H[{v'_{q-1}, v'_{q}}]$. It should be noted that $v'_{q-1}$ and $v'_{q}$ can be adjacent in $H$, in which case $H[V(G')]$ is isomorphic to $K_{q+1}$.
		Now, let's define $B'$ as follows:
		\[ B'\isdef\{v'_0,v'_1,\ldots, v_{q-2}'\}=V'\setminus\{v'_{q-1}, v'_{q}\}.\]
		Since $q \geq 4$, we have $|B'| \geq 2$. 	By the maximality of $S$ and using properties (I) and (II), we can conclude that for each $v \in B'$, $H[S \cup {v}]$ contains at least one copy of $R$. Since $v \in B'$, we can easily observe that $|N(v) \cap \overline{S}| \geq q$. Consequently, $|N(v) \cap S| \leq p-1$.
		The fact that $v$ lies in at least one copy of $R$ in $H[S \cup \{v\}]$ implies that $|N(v) \cap S| = p-1$. Therefore, we have $|N(v) \cap \overline{S}| = q$. If this were not the case, then $\deg(v) \geq p+q = d+1$, which would be a contradiction to the assumption that $\Delta(H) = d$.
		
		%========================================================================================
		%========================================================================================
		\begin{claim}\label{c1}
			Suppose that $C$ is a connected component of $H[S\cup \{v\}]$ containing  $v$. Then $C$  is  $(p-1)$-regular graph and  isomorphic to $R$ . 
		\end{claim}
		\begin{proof}[\bf Proof of Claim~\ref{c1}]  $C$ contains at least one copy of $R$, denoted as $R'$. We want to show that $C = R'$.
			
			Suppose, for the sake of contradiction, that $C \neq R'$. Without loss of generality, assume that $|V(C)| \geq |V(R')|+1$, which implies that there exists at least one vertex $v'$ in $C$ such that $v' \notin R'$.
			
			As $C$ is a connected component of $H[S \cup \{v\}]$, let us consider the distance $d_C(v,v')$ between $v$ and $v'$ in $C$. Note that since $|N(v) \cap S| = p-1$, all copies of $R$ in $C$ must contain the neighbourhood $N(v) \cap S$.
			
			Now, let $i \geq 1$ be the largest integer such that for every vertex $u \in C$ with $d_C(v,u) = i$, $u$ is included in all copies of $R$ in $C$. Since $v' \notin R'$, there exists at least one vertex $w$ in $R' \cap C$ such that $d_C(v,w) = i+1 \leq d_C(v,v')$, and $w \notin V(R')$. Consequently, there exists at least one neighbour $y$ of $w$ in $C$ such that $d(v,y) = d(v,w) - 1 = i$.
			
			As $d(v,y) = i$, vertex $y$ is included in all copies of $R$ in $C$. Let $S_1 = (S \cup \{v\}) \setminus \{y\}$. Note that $|S_1| = |S|$ and $H[S_1]$ is $R$-free because $y$ lies in all copies of $R$ in $H[S \cup \{v\}]$. Since $y$ is included in at least one copy of $R$ in $H[S \cup \{v\}]$, and at least one of them does not include $w$, we have $|N(y) \cap S_1| = |N(y) \cap (S \cup \{v\})| \geq p$. Therefore, $|N(y) \cap \overline{S}_1| \leq q-1$, which implies that $y$ lies in at most one copy of $G$ in $H \setminus S_1$. If $y$ is not contained in any copy of $G$ in $H \setminus S_1$, it contradicts property (P1).
			
			Therefore, we can assume that $y$ lies in one copy of $G$, and the number of copies of $G$ in $H \setminus S_1$ is equal to the number of copies of $G$ in $H \setminus S$. Since $|N(v) \cap \overline{S}| = q$ and $|N(y) \cap \overline{S}_1| = q-1$, it can be verified that $|E(H[\overline{S}_1])| \leq |E(H[\overline{S}])| - 1$. This contradicts property (P2). Hence, we conclude that $V(C) = V(R)$.
			
			Assume that all copies of $R$ in $C$ have the same vertex set $V(C)$. If there are at least two distinct copies of $R$ in $C$ with a common vertex set, then there exists a vertex $u \in R \subseteq C$ such that $|N(u) \cap V(C)| \geq p$. Define $S_1 = (S \cup \{v\}) \setminus \{u\}$, and the proof follows similarly as in the previous paragraph. Therefore, $C \cong R$.
			
			Now we shall show that $C$ is $(p-1)$-regular. Assume that there exists a vertex $y$ in $C$ with more than $p-1$ neighbour in $C$. Then, $|N(y) \cap S_1| = |N(y) \cap (S \cup \{v\})| \geq p$. This implies that $|N(y) \cap \overline{S}_1| \leq q-1$, which means that $y$ lies in at most one copy of $G$ in $H \setminus S_1$. If $y$ is not contained in any copy of $G$ in $H \setminus S_1$, it contradicts property (P1). Therefore, we can assume that $y$ lies in one copy of $G$, and the number of copies of $G$ in $H \setminus S_1$ is equal to the number of copies of $G$ in $H \setminus S$. However, it can be shown that $|E(H[\overline{S}_1])| \leq |E(H[\overline{S}])| - 1$, which contradicts property (P2).
			
			Hence, we conclude that $C$ is a $(p-1)$-regular graph.	
		\end{proof}
		%========================================================================================
		%========================================================================================
		
		Assuming that $R'$ is a copy of $R$ in $H[S\cup \{v\}]$, we can consider a vertex $y$ belonging to $V(R')\setminus \{v\}$, where $V(R')\setminus \{v\}\subseteq S$. Based on this, we can establish the following claim.

		%========================================================================================
		\begin{claim}\label{c2}
			There is a copy of $G$ in  $H[(\overline{S}\setminus \{v\})\cup\{y\} ]$, which contains  $y$ and $|N(y)\cap (\overline{S}\setminus\{v\})|= q$.
		\end{claim}
		
		\begin{proof}[\bf proof of Claim \ref{c2}]
			According to Claim \ref{c1}, $R'$ is a $(p-1)$-regular graph and one of the connected components in $H[S\cup \{v\}]$. Consequently, for any vertex $y$ in $V(R')$, we have $|N(y)\cap (S\cup \{v\})|=p-1$, which implies $|N(y)\cap (\overline{S}\setminus\{v\})|\leq q$. Let's define $S' = ({S}\cup\{v\}) \setminus\{y\}$. Since $|S'|=|S|$ and $H[S']$ is $R$-free, we can conclude that $S'\in \F$. As $v$ belongs to a copy of $G$ in $H[\overline{S}]$ and $v\in B'$, we have $|N(v)\cap \overline{S}|=q$. According to $(P1)$, $y$ must be present in at least one copy of $G$ in $H[\overline{S'}]$. By applying $(P2)$, it can be easily verified that $|N(y)\cap (\overline{S}\setminus\{v\})|= q$.		
		\end{proof}
		Assume that $S=S_0$ and  $G_0$ is a copy of $G$ in $H[\overline{S}_0]$.  Assume that $V_0=V(G_0)=\{v_0^0,v_1^0,\ldots, v_{q}^0\}$, where $G_0[V_0\setminus \{v^0_{q-1}, v^0_{q}\}]\cong K_{q-1}$ and $G_0\cong K_{q-1}\oplus H[\{v^0_{q-1}, v^0_{q}\}]$. Now define  $B_0$  as follows
		\[ B_0\isdef\{v_0^0,v_1^0,\ldots, v_{q-2}^0\}=V_0\setminus\{v^0_{q-1}, v^0_{q}\}.\]
		
		Since $q\geq 4$, we can conclude that $|B_0|\geq 2$. Let's consider a vertex $v_0$ in $B_0$. Based on Claim \ref{c1}, $H[S_0\cup\{v_0\}]$ contains a unique copy of $R$, denoted as $R_0$, which is $(p-1)$-regular and one of the connected components of $H[S_0\cup\{v_0\}]$.
		
		Let's choose a vertex $y_0$ from $V(R_0)$ such that $y_0$ is not a cut vertex in $R_0$. Please note that we will use the assumption that $y_0$ is not a cut vertex in the remaining part of the proof.
		
		Define $S_1=(S_0\cup \{v_0\}) \setminus\{y_0\}$. It can be verified that $H[S_1]$ is $R$-free, and $|S_1|=|S_0|$. Hence, $S_1\in \F$. According to $(P1)$, and considering the fact that $v_0$ lies in at least one copy of $G$ in $\overline{S}_0$, we can deduce that $y_0$ must lie in a copy of $G$ in $H[\overline{S}_1]$, denoted as $G_1$.
		
		As $v_0\in B_0$, we have $|N(v_0)\cap \overline{S}_0|=q$. Therefore, utilizing $(P2)$, we can conclude that $|N(y_0)\cap (\overline{S}_0\setminus\{v_0\}) |= q$.
		
		Let's assume that $V_1=V(G_1)=\{v_0^1,v_1^1,\ldots, v_{q}^1\}$, and $G_1\cong K_{q-1}\oplus H[\{v^1_{q-1}, v^1_{q}\}]$, where $G_1[V_1\setminus \{v^1_{q-1}, v^1_{q}\}]\cong K_{q-1}$. Now, let's define $B_1$ as follows: if $v^1_{q-1}v^1_{q}\not \in E(H)$, then we define
		\[ B_1\isdef\{v_0^1,v_1^1,\ldots, v_{q-2}^1\}=V_1\setminus\{v^1_{q-1}, v^1_{q}\}\]  
		If $v^1_{q-1}v^1_{q}\in E(H)$, we proceed as follows: Let's assume that $W$ is a subset of $q-1$ elements from $V(G_1)$ such that $|W\cap B_0|$ has the maximum cardinality among all subsets of $q-1$ elements from $V(G_1)$. In this case, we define $B_1=W$.
		
		Now, let's present the following two claims based on the above construction:
		%========================================================================================
		\begin{claim}\label{c3}  If $|(B_0\setminus\{v_0\})\cap (B_1\setminus\{y_0\})|\neq 0$, then  $B_0\setminus\{v_0\}= B_1\setminus\{y_0\}$.
		\end{claim}
		\begin{proof}[\bf Proof of Claim~\ref{c3}]Let's consider the case where $v_{q-1}^0v_{q}^0\notin E(H)$. In this situation, it can be observed that $\{v_{q-1}^0,v_{q}^0\}\nsubseteq B_1$ and $v_{q-1}^0,v_{q}^0\notin B_0$. Suppose there exists a vertex $z\in (B_0\setminus\{v_0\})\cap (B_1\setminus\{y_0\})$. Since $z\in B_0\setminus\{v_0\}$, we can deduce that $|N(z)\cap (\overline{S}_0\setminus \{v_0\})|= q-1$. Considering the fact that $z\in B_1$ and based on Part (I), we have $|N(z)\cap\overline{S}_1|= q$. As $y_0$ is adjacent to $z$ and $y_0\notin\{v_{q-1}^0,v_{q}^0\}$, it must be the case that $y_0\in B_1$. Consequently, we have $N(z)\cap(\overline{S_1}\setminus \{y_0\})= N(y_0)\cap(\overline{S_1}\setminus\{z\})$.
			
			Now, let's assume by contradiction that there exists a vertex $z'$ in $B_0\setminus \{v_0, z\}$ such that $z'\notin B_1\setminus \{y_0\}$. As $z'$ in $B_0\setminus \{v_0, z\}$, $v_{q-1}^0,v_{q}^0\in V(G_0)\setminus B_0$, we have $z'v_{q-1}^0,'v_{q}^0\in E(H)$. This implies that $|N(z)\cap \overline{S_1}|\geq q+1$, which is not possible. Hence, we can conclude that $B_0\setminus \{v_0\}=B_1\setminus \{y_0\}$.
			
			Now, let's consider the case where $v_{q-1}^0v_{q}^0\in E(H)$, which implies $H[V(G_0)]\cong K_{q+1}$. According to (II), $H[V(G_0)]$ is a connected component of $H[\overline{S}]$. Additionally, based on (P1), we can assume that $H[V(G_1)]\cong K_{q+1}$. Since $|(B_0\setminus\{v_0\})\cap (B_1\setminus\{y_0\})|\neq 0$, it can be verified that $V(G_0)\setminus\{v_0\}=V(G_1)\setminus\{y_0\}$. Therefore, considering this fact and the definition of $B_1$, we have $(B_1\setminus\{y_0\})=(B_0\setminus\{v_0\})$.
		\end{proof}
		%========================================================================================
		
		%========================================================================================
		\begin{claim}\label{c4} 
			If $|(B_0\setminus\{v_0\})\cap (B_1\setminus\{y_0\})|\neq 0$, then $K_{d+1}\setminus e\subseteq H$.
		\end{claim}
		\begin{proof}[\bf Proof of Claim~\ref{c4}]  Let's assume that $|(B_0\setminus\{v_0\})\cap (B_1\setminus\{y_0\})|\neq 0$. According to Claim~\ref{c3}, we have $B_0\setminus\{v_0\}= B_1\setminus\{y_0\}$. Now, let $z$ be a vertex in $B_1\setminus\{y_0\}$. It follows that $|N(z)\cap \overline{S}_1|= q$ and, consequently, $|N(z)\cap S_1|= p-1$. Based on Claim~\ref{c1}, we know that $S_1 \cup \{z\}$ contains a unique copy of $R$, denoted as $R_z$, such that $z$ lies in $R_z$, and $R_z$ is one of the connected components of $H[S_1\cup\{z\}]$. Considering the fact that $z$ is adjacent to $v_0$ in $H[\overline{S}_0]$, $y_0$ is not a cut vertex in $H[S_0\cup\{v_0\}]$, and $R_0$ is $(p-1)$-regular and one of the connected components of $H[S_0\cup\{v_0\}]$, we can conclude that $N(y_0)\cap (S_0\cup \{v_0\})= N(y_0)\cap V(R_0)= N(z)\cap S_1$. Therefore, we can deduce that $v_0$ is adjacent to $y_0$.
			
			We need to show that $H[N(y_0)\cap V(R_0)]\cong K_{p-1}$. Let's consider an arbitrary vertex $y \in N(y_0)\cap V(R_0)$. From the previous reasoning, we know that $N(z)\cap S_0= (N(y_0)\cap S_0)\cup \{y_0\}$ for each $z\in B_1\setminus\{y_0\}$.
			
			Now, let's define $S''\isdef S_0\cup\{v_0, z\}\setminus\{y\}$. Since $S_0$ is maximal, $H[S'']$ must contain at least one copy of $R$, denoted as $R'$. It's easy to see that $R'$ must contain $z$. As $R_0$ is $(p-1)$-regular and one of the connected components of $H[S_0\cup\{v_0\}]$, and $z$ has exactly $p-1$ neighbours in $S''$, we can conclude that $N(z)\cap V(R')= N(y)\cap V(R_0)$.
			
			Considering that $N(z)\cap S_0= (N(y_0)\cap S_0)\cup \{y_0\}$, we have $N(y)\cap (V(R_0)\setminus \{y_0\})=N(y_0)\cap (V(R_0)\setminus \{y\})$. Since $y$ is an arbitrary vertex in $N(y_0)\cap V(R_0)$, we can conclude that $H[N(y_0)\cap V(R_0)]\cong K_{p-1}$. Therefore, $R_0$ is isomorphic to $K_p$.

			To summarize the argument:
			
			We want to show that $K_{d-1}=K_{p+q-2}\subseteq H$. We already know that $R_0\cong K_p$ and that $N(y_0)\cap R_0= N(z)\cap S_1$ for each $z\in (B_0\setminus \{v_0\})\cap (B_1\setminus \{y_0\})$. Therefore, it is sufficient to prove that each vertex $y\in V(R_0)$ is adjacent to both $v_{q-1}^0$ and $v_q^0$.
			
			Assume by contradiction that there exists a vertex $y\in V(R_0)$ that is not adjacent to either $v_{q-1}^0$ or $v_q^0$. In this case, we define $S'\isdef (S_0\cup\{v_0\})\setminus\{y\}$. Since $q\geq 4$, if $v_{q-1}^0$ and $v_q^0$ are not adjacent, then $H[{\overline {S'}}]$ does not contain any copy of $G$, which contradicts (P1). Otherwise assume that $G'$ is a copy of $G$ in $H[\overline{S'}]$, which contains $y$. Hence one can say that  each vertex  $x\in B'\cap B_0$, has at least $q+1$ neighbours in $\overline{S'}$, which is not possible.
			
			Therefore, we conclude that $v_{q-1}^0$ and $v_q^0$ must be adjacent.
			
			Now, as $v_{q-1}^0$ and $v_q^0$ are adjacent, we have $H[V(G_0)]\cong K_{q+1}$. Since $y$ is not adjacent to at least one of $v_{q-1}^0$ and $v_q^0$, it follows that $H[\overline{S'}]$ does not contain any copy of $K_{q+1}$, which contradicts (P1).
			
			Therefore, in either case, we have shown that $K_{d+1}\subseteq H[N[v_0]]\subseteq H$. Combining this with the previous result that $K_{d-1}\subseteq H$, we conclude that $K_{d+1}\setminus \{e\}\subseteq H$.
		\end{proof}

		We can assume that $(B_0\setminus{v_0})\cap (B_1\setminus{y_0})= \varnothing$ based on Claim \ref{c4}.
		
		Now, let's consider the case when $i\geq 2$. We will assume that we have already defined $S_{i-1}$, $G_{i-1}$, $V_{i-1}$, $B_{i-1}$, $v_{i-1}$, ,$R_{i-1}$, and $y_{i-1}$, where:
		\begin{itemize}
			\item $S_{i-1}\in \F$,
			\item $G_{i-1}$ is a copy of $G$ in $\overline{S}_{i-1}$,
			\item $V_{i-1}=V(G_{i-1})=\{v^{i-1}_0,v^{i-1}_1,\ldots, v^{i-1}_{q}\}$ and $H[V_{i-1}\setminus \{v^{i-1}_{q-1}, v^{i-1}_{q}\}]\cong K_{q-1}=K'$.
			\item  $G_{i-1}\cong K'\oplus H[\{v^{i-1}_{q-1}, v^{i-1}_{q}\}]$,
			\item If $v^{i-1}_{q-1}v^{i-1}_{q}\notin E(H)$, then: $B_{i-1}=\{v^{i-1}_0,v^{i-1}_1,\ldots, v^{i-1}_{q-2}\}$,
			\item 	If $v^{i-1}_{q-1}v^{i-1}_{q}\in E(H)$, we proceed as follows: Let's assume that $W$ is a subset of $q-1$ elements from $V(G_{i-1})$ such that $|W\cap B_{j}|$ has the maximum cardinality among all subsets of $q-1$ elements from $V(G_{i-1})$, for each $j\leq i-2$. In this case, we define $B_{i-1}=W$.
			
			\item  $v_{i-1}$ in $B_{i-1}$,
			\item  $R_{i-1}$ is a copy of $R$ in $H[S_{i-1}\cup\{v_{i-1}\}]$.
		\end{itemize}
		As $S_{i-1}\in \F$, we can conclude that $H[S_{i-1}\cup\{v_{i-1}\}]$ contains a unique copy of $R$, denoted as $R_{i-1}$. This copy $R_{i-1}$ is one of the connected components in $H[S_{i-1}\cup\{v_{i-1}\}]$, as stated in Claim \ref{c3}. Let's assume that $y_{i-1}$ is a vertex in $R_{i-1}\setminus \{v_{i-1}\}$ and it is not a cut vertex.
		
		Next, we will define $S_i$, $B_i$, $G_i$, $V_i$, $v_i$, $R_i$, and $y_i$ for the next iteration $i$.
		
		We define $S_i$ as $S_i = (S_{i-1}\cup\{v_{i-1}\}) \setminus \{y_{i-1}\}$. Since $y_{i-1}\in R_{i-1}$ and $R_{i-1}$ is one of the connected components in $H[S_{i-1}\cup\{v_{i-1}\}]$, it follows that $S_i$ belongs to $\F$.
		
		Based on Claim \ref{c2}, let's assume that $G_i$ is a copy of $G$ in $H[\overline{S}i]$ that contains $y_{i-1}$. We can denote the vertex set of $G_i$ as $V_i = V(G_i) = \{v_0^i, v_1^i, \ldots, v_q^i\}$. Furthermore, we have $G_i\cong K_{q-1}\oplus H[{v_{q-1}^i, v_q^i}]$.
		Now, we define $B_i$ as follows:
		
		If ${v_{q-1}^i, v_q^i}\not \in E(H)$, then $B_i$ is given by $B_i = \{v_0^i, v_1^i, \ldots, v_{q-2}^i\} = V_i\setminus\{v_{q-1}^i, v_q^i\}$.\\
		If $v^i_{q-1} v^i_{q} \in E(H)$, we proceed as follows: 
		
		We assume that $W$ is a $(q-1)$-element subset of $V(G_i)$ such that $W$ has the maximum possible intersection with one of the sets $B_j$ for $0 \leq j \leq i-1$. We define $B_i$ as $B_i = W$.

		%Set $v_i\in B_i$, then as $S_i\in \F$, so $H[S_i\cup\{v_i\}]$ consists  a copy of $R$, say $R_i$, so that $R_i$ is a unique copy of $R$ in $H[S_i\cup\{v_i\}]$. Suppose that $y_i$ is an arbitrary vertex of $N(v_{i-1})\cap R_i$, then by Claim~\ref{c2}  assume that $G_{i}$  be a copy of $G$ in $H[\overline{S}_{i}]$ and contain of $y_{i}$. Suppose that $V_{i+1}=V(G_{i+1})=\{v_0^{i+1},v_1^{i+1},\ldots, v_{q}^{i+1}\}$ and $G_{i+1}\cong K_{q-1}\oplus \{v^{i+1}_{q-1}, v^{i+1}_{q}\}$ where $G_{i+1}[V_{i+1}\setminus \{v^{i+1}_{q-1}, v^{i+1}_{q}\}]\cong K_{q-1}$. Now define  $B_{y_{i}}=B_{i+1}$  as follow:
		%\[ B_{i+1}\isdef\{v_0^{i+1},v_1^{i+1},\ldots, v_{q-2}^{i+1}\}=V_{i+1}\setminus\{v^{i+1}_{q-1}, v^{i+1}_{q}\}.\] 
		
		%If for some $i> j\geq 1$, $|B_j \cap B_i|\neq 0$, then the same proof of Claim~\ref{c4} works and
		%we obtain $K_{d}\setminus e\subseteq H$. Hence, one may assume that $|B_j \cap B_i|= 0$ for any $i> j\geq 1$.
		
		%========================================================================================
		Since $H$ is a finite graph, there exists a minimum number $\ell \geq 2$ such that $(B_{\ell}\setminus\{y_{\ell-1}\})\cap (B_j\setminus\{v_j\}) \neq \varnothing$ for some $j \leq \ell-1$. Without loss of generality, we may assume that $j=0$.
		\begin{claim}\label{c6}   
			We have $B_{\ell}\setminus\{y_{\ell-1}\}\subseteq V(G_0)\setminus\{v_0\}$. In particular,
			If $v_{q-1}^0v_{q}^0\notin E(H)$, then $B_{\ell}\setminus\{y_{\ell-1}\}=   B_{0}\setminus\{v_{0}\}$.
		\end{claim}
		\begin{proof}[\bf Proof of Claim~\ref{c6}]  
			By the minimality of $\ell$, we have $B_{0}\setminus\{v_0\} \subseteq \overline{S}{\ell}$. We also aim to show that $\{v_{q-1}^0,v_{q}^0\} \subset \overline{S}_{\ell}$.
			
			Assume, to the contrary, that $v_{q}^0 \not\in \overline{S}{\ell}$. Therefore, there exists some $1 < i_0 < \ell$ such that $v_{q}^0 = v_{i_0} \in B_{i_0}$. Since $q \geq 4$, we have $B_{i_0} \cap B_0 \neq \varnothing$, which contradicts the minimality of $\ell$.

			Suppose $z\in (B_{0}\setminus\{v_0\})\cap B_{\ell}$. Vertex $z$ has $q$ neighbour in $G_{\ell}$. Now, assume to the contrary that there exists $w\in (B_{0}\setminus\{v_0\})\setminus B_{\ell}$.
			
			First, consider the case where $v_{q-1}^0v_{q}^0\notin E(H)$. Since the induced subgraph of $H$ on $B_{\ell}$ is isomorphic to $K_{q-1}$, we have $\{v_{q-1}^0,v_{q}^0\}\nsubseteq B_{\ell}$. We aim to show that $w$ does not belong to $N(v_{\ell})\setminus B_{\ell}$. Assume, by contradiction, that $w$ belongs to $N(v_{\ell})\setminus B_{\ell}$. Therefore, at least one of the vertices in $\{v_{q-1}^0,v_{q}^0\}=N(v_0)\setminus B_{0}$ does not belong to $N(v_{\ell})\cap \overline{S}{\ell}$. Consequently, $|N(z)\cap \overline{S}_{\ell}|\geq q+1$, which is not possible. Hence, $w$ does not belong to $N(v_{\ell})\setminus B_{\ell}$. As a result, $|N(z)\cap \overline{S}_{\ell}|\geq q+1$, which is not possible.
			
			Now, let's consider the case where $v_{q-1}^0v_{q}^0\in E(H)$. In this case, $H[V(G_0)]\cong K_{q+1}$ by condition (II). We can assume that $H[V(G_0)]\setminus\{v_0\}$ is a connected component of $H[\overline{S_{\ell}}]$. Since $z\in B_{\ell}\cap B_0$ and $y_{\ell-1}\in V(G_{\ell})$, $y_{\ell-1}$ must be adjacent to all neighbors of $z$ in $\overline{S_{\ell}}$, except possibly one of them.
			Therefore, we have $V(G_0)\setminus\{v_0\}=V(G_{\ell})\setminus\{y_{\ell-1}\}$. If $y_{\ell-1}$ is not adjacent to one of the neighbour of $z$ in $\overline{S_{\ell}}$, denoted as $v^q_{\ell}$, then we can set $B_{\ell} = V(G_0)\setminus\{v_0,v^q_{\ell}\}$. If $y_{\ell-1}$ is adjacent to all neighbours of $z$, then $G_{\ell}$ is isomorphic to $K_{q+1}$. In this case, we define $B_{\ell}=(B_0\setminus\{v_0\})\cup \{y_{\ell-1}\}$, and the proof is complete.	
		\end{proof}
		
		Now we are in a position to complete the proof of Theorem~\ref{M.th}.
		
		Recall that we have defined a sequence of sets $S_0, S_1, \ldots, S_{\ell}$ and corresponding subgraphs $G_0, G_1, \ldots, G_{\ell}$ in the graph $H$.
		
		We have shown that each set $S_i$ belongs to $\mathcal{F}$ and that $G_i$ is a copy of $G$ in $H[\overline{S_i}]$. Furthermore, we have defined sets $B_i$ for $0 \leq i \leq \ell$.
		
		By the construction of $B_i$, it follows that for $0 \leq i \leq \ell$, the subgraph induced by $B_i$ is isomorphic to $K_{q-1}$, except for the last step where it may be isomorphic to $K_{q}$ if $y_{\ell-1}$ is adjacent to all neighbours of $z$ in $\overline{S_{\ell}}$.
		
		We have also shown that $B_{\ell}\setminus\{y_{\ell-1}\} \subseteq \overline{S_{\ell}}$. Additionally, if $v_{q-1}^0 v_q^0 \in E(H)$, then $H[V(G_0)]\setminus\{v_0\}$ is a connected component of $H[\overline{S_{\ell}}]$, and we have $V(G_0)\setminus\{v_0\}=V(G_{\ell})\setminus\{y_{\ell-1}\}$.
		
		Therefore, we have successfully constructed a sequence of sets $S_0, S_1, \ldots, S_{\ell}$ and subgraphs $G_0, G_1, \ldots, G_{\ell}$ that satisfy the properties stated in Theorem~\ref{M.th}. Therefore, the theorem is proven.

		%	\begin{claim} $\{v_0^{q-1},v_0^q\}\subset  {\overline S_{\ell}}$.
			%	
			%		\end{claim}
		%	\begin{claim} $v_0\in  S_{\ell}$.
			%	
			%		\end{claim}
		\begin{claim}\label{cc5} 	 The statement of  part ~(III) is true.
		\end{claim}
		\begin{proof}[\bf Proof of Claim~\ref{cc5}]
			
			By the maximality of $S_{\ell}$, for each $w \in B_{\ell} \cap (B_{0}\setminus\{v_0\})$, $S_{\ell} \cup \{w\}$ contains a unique copy of $R$ that contains $w$, denoted as $R_w$.
			This fact, combined with Claim \ref{c2}, implies that $v_0$ has exactly $p-1$ neighbour in $S_{\ell}\cup\{w\}$.
			
			On the other hand, since $R_0$ is $(p-1)$-regular and contains $v_0$, $v_0$ has exactly $p-1$ neighbour in $S_0$. Since $w$ is adjacent to $v_0$ in $H[S_{\ell}\cup\{w\}]$, there must exist $0 \leq i_0 < \ell$ such that $y_{i_0}$ is adjacent to $v_0$ in $R_{i_0}$. If there is no such $i_0$, then $|N(v_0)\cap (S_{\ell}\cup\{w\})|\geq p$, which contradicts the fact that $R_w$ is $(p-1)$-regular component of $H[S_{\ell}\cup\{w\}]$. Note that the minimality of $\ell$ and $\{v_0^{q-1},v_0^q\} \subset {\overline S_{\ell}}$ imply that there is only one such $i_0$.
			
			For each $w \in B_{0}\setminus\{v_0\}$, since $w$ is adjacent to $v_0$ in $H[S_{\ell}\cup\{w\}]$ and $R_{i_0}\setminus\{y_{i_0}\}$ is one of the connected components of $H[S_{\ell}\cup\{w\}]$, we have $N(w)\cap S_{\ell} = N(w)\cap (S_{i_0}\cup \{v_{i_0}\}) = N(y_{i_0})\cap (S_{i_0}\cup \{v_{i_0}\})$.
			
			Now, consider an arbitrary vertex $y \in N(w)\cap S_{\ell} = N(y_{i_0})\cap S_{i_0}$, and let $w'$ be a vertex in $B_0\setminus\{v_0,w\}$. Since $H[(S_{\ell}\cup\{w\})\setminus\{y\}]$ does not contain any copy of $R$, and $H[(S_{\ell}\cup\{w',w\})\setminus\{y\}]$ contains a copy of $R$, it can be shown that $N(w')\cap ((S_{\ell}\cup\{w\})\setminus\{y\}) = N(y)\cap (S_{\ell}\cup \{w\})$. Therefore, the subgraph induced by $N(w)\cap S_{\ell}$ is isomorphic to $K_{p-1}$.
			
			Since $H[S_{\ell}\cup\{w\}]$ contains a copy of $R$ and $w \in B_{\ell}$, by Claim \ref{c2}, every vertex $y \in N(w)\cap S_{\ell}$ must lie in at least one copy of $G$ in $H[({\overline S_{\ell}}\setminus\{w\})\cup{y}]$, and consequently, $y$ must be adjacent to all vertices of $G_{\ell}\setminus\{w\}$, except possibly $y_{\ell-1}$.
			
			Now, consider $w$ and its neighbour. It can be verified that $K_{p+q-1} \subseteq H[N(w)\cup\{w\}]$, which completes the proof.
			
			Note that if $v_{q-1}^0 v_{q}^0 \notin E(H)$, then $y$ must be adjacent to $y_{\ell-1}$, and as a result, a copy of $K_{p+q}\setminus e$ is contained in $H[N(w)\cup\{w\}]$. 
		\end{proof}	
\end{proof}
%========================================================================================
\section{Proof the Main result}
\begin{proof}[\bf Proof  Theorem \ref{mth1}]
	Suppose that $H = (V, E)$ is a connected graph with maximum degree $\Delta(H)\geq 5$  and  $H$ is $K_{\Delta(H)+1}\setminus e$-free. Suppose that $p$ and $q$  are two positive integers, such that $p\geq 2, q\geq 4$ and  $\Delta(H)+1=p+q$. Set $\G$ as a collection of   graphs with minimum degree at least $p-1$.  Consider a $(V_1,V_2)$-partition of $V(H)$, such that $H[V_1]$  is a maximum order $\G$-free induced subgraph of $H$. By maximality $V_1$ one can say that $\Delta(H[V_2])\leq q$. If $H[V_2]$  is  $K_q$-free or  its $q$-cliques are disjoint, then the proof is complete. Therefore, by contradiction suppose that there are at least two copies $K, K'$  of $K_q$ in $H[V_2]$,  such that $V(K)\cap V(K')\neq \emptyset$. Hence,  $\Delta(H[V_2])\leq q$,  implies that there exists at least one copy of $G$ in $H[V(K)\cup V(K')]\subseteq H[V_2]$, otherwise one can say that there is at least one member $v$  of $V(K)\cap V(K')$ such that $|N(v)\cap V_2|\geq q+1$, which is not possible. Hence by Theorem \ref{M.th}, we have $K_{\Delta(H)+1}\setminus e\subseteq H$, which would be a contradiction to the assumption. So, the theorem holds. 		 
\end{proof}
\begin{proof}[\bf Proof  Corollary \ref{cor1}] The proof is by induction on $k$. For $k=2$ the proof is complete by Theorem \ref{mth1}.	To prove the statement for $k\geq 3$, we
	set $p=p_1$ and $q=\sum_{i=2}^{k}p_i-(k-2)$. Note that $p+q=\Delta(H)+1$ and also   $p\geq 2, q\geq 4$.  Since the statement is true for $k=2$, we can obtain a partition of $V(H)$ into  $V_1$ and $V_2$ such that $H[V_1]$ is maximum $\G_1$-free, the maximum degree of $H[V_2]$ is at most $\Delta(H)-(p-1)=q$ and  $H[ V_2] $ is $K_q$-free  or  its $q$-cliques are disjoint. 
	
	If the maximum degree of $H[V_2]$ is less than $q$, let $v\in V_2$ be a vertex with maximum degree in $H[V_2]$ such that its degree is equal to $q'<q$. We add $q-q'$ new vertices to $H[V_2]$ and join all of them to $v$, forming a new graph $H'$. The graph $H'$ has maximum degree $q$ and is $K_q$-free  or  its $q$-cliques are disjoint. Since $k\geq 3$, $p_2\geq 2$ and $p_k\geq 4$, we have $\Delta(H')\geq 5$. Also as  the graph $H'$ is $K_q$-free  or  its $q$-cliques are disjoint one can say that $K_{\Delta(H')+1\setminus e}\nsubseteq H'$.  	 
	Therefore by induction there exists a partition of $V(H')$   into  $W_2,\cdots, W_{k}$ such that for each $2\leq i\leq k-1$, $H'[W_i]$ is $\G_i$-free, and the maximum degree of $H[W_k]$ is at most $p_k$ and  $H[ W_k] $ is $K_{p_k}$-free  or  its $p_k$-cliques are disjoint. 
	so $V_1, W_2\cap V_{2},\ldots, W_{k}\cap V_2$ is the desired partition of $V(H)$.
	
	Therefore, we may assume that $H[V_{2}]$  is a graph with maximum degree $q\geq 5$. We also have $\omega(H[V_{2}])\leq q$,   the maximum degree of $H[V_2]$ is at most $\Delta(H)-(p-1)=q$ and  $H[V_2] $ is $K_q$-free  or  its $q$-cliques are disjoint. We have $p_i\geq 2, ~p_k\geq 4$ and $\sum_{i=2}^{k}p_i=\Delta(H[V_2])-1+(k-1)$. Also as  the graph $H[V_2]$ is $K_q$-free  or  its $q$-cliques are disjoint one can say that $K_{\Delta(H[V_2])+1\setminus e}\nsubseteq H[V_2]$.  	 
	
	We may assume that $H[V_2]$ is a connected graph, otherwise if $H[V_2]$ has $\ell\ge 2$ connected components,
	say $H_1,\ldots, H_{\ell}$,  we prove the statement for each of these connected components. Then, for each connected component  $H_i$, 
	there exists a partition of $V(H_i)$ into  $V_{i,2}, \ldots V_{i,k}$  such that for each $2\leq t\leq k-1$, $H[V_{i,t}]$ is $\G_t$-free   and the maximum degree of $H[V_{i,k}]$ is at most $p_k$ and  $H[V_{i,k}] $ is $K_{p_k}$-free  or  its $p_k$-cliques are disjoint.
	Now, for each $2\leq j\leq k$ define $V_j=\cup_{i=1}^{\ell}V_{i,j}$. Therefore, $V_1,  V_{2},\ldots, V_k$ is the desired partition of $V(H)$.
	Hence the proof is complete. 
\end{proof}

To prove Lemma 	\ref{ml1} for the case tht $q=2,3$, we need the following theorem.
\begin{theorem}\label{th5}\cite{rabern2011hitting}
	If $H$ is a  graph with $\omega(H)\geq \frac{3(\Delta+1)}{4}$, then $H$ has an independent set $I$ such that $\omega(H\setminus I)\leq \omega(H)-1 $.
\end{theorem}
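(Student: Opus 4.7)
The target is an independent set $I \subseteq V(H)$ that meets every maximum clique of $H$; removing such an $I$ drops $\omega$ by at least one. Write $d = \Delta(H)$ and $\omega = \omega(H)$, so the hypothesis reads $\omega \geq 3(d+1)/4$.

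The first step is a quantitative overlap bound: for any two maximum cliques $K, K'$ sharing a vertex $v$, every other element of $K \cup K'$ is a neighbour of $v$, whence $d \geq \deg(v) \geq |K \cup K'| - 1 = 2\omega - |K \cap K'| - 1$. Rearranging, $|K \cap K'| \geq 2\omega - d - 1$, which under the hypothesis is at least $(d+1)/2 \geq \omega/2$. So any two intersecting maximum cliques overlap in a strict majority of their vertices. A separate short argument (counting edges between two disjoint maximum cliques: each vertex of $K$ sends at most $d - \omega + 1 < \omega/2$ edges to $K'$, which is too few to avoid promoting some vertex of $K$ into $K'$ via a swap producing a clique of size $\omega + 1$) rules out disjoint pairs. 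Thus all maximum cliques pairwise intersect in more than half their vertices.

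I would then convert this structural information into an independent transversal iteratively. Pick any maximum clique $K_0$ and any vertex $v_0 \in K_0$; by the majority-overlap property, $v_0$ lies in many maximum cliques, and every maximum clique avoiding $v_0$ must still have large intersection with every other maximum clique. Recurse inside the subgraph $H \setminus N[v_0]$ (so that future picks remain independent with $v_0$) on the family of maximum cliques of $H$ that avoid $v_0$. The hypothesis forces this recursion to terminate in boundedly many steps: either at some stage every remaining maximum clique is hit, or the surviving family collapses (by a degree count against $d - \omega + 1$) into a disjoint union of cliques in the shrunk graph, which can be hit by choosing one vertex from each.

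The main obstacle is certifying that the iterative construction actually produces an independent set hitting every maximum clique, rather than stalling with some clique missed and blocked by prior choices. The cleanest way to resolve this is to appeal to Haxell's independent-transversal theorem applied to the blocks given by maximum cliques: the large pairwise overlaps translate into the degree hypothesis Haxell needs, and it is precisely the fraction $3/4$ in $\omega \geq 3(d+1)/4$ that calibrates this degree bound, explaining why the ratio cannot be weakened.
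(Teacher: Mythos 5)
This statement is quoted by the paper from Rabern's work (the citation \cite{rabern2011hitting}); the paper itself gives no proof, so your attempt can only be measured against the known argument, which runs as follows: group the maximum cliques into components of their intersection graph, use Hajnal's clique-collection lemma ($|\bigcap\mathcal{F}|+|\bigcup\mathcal{F}|\geq 2\omega$ for any family $\mathcal{F}$ of maximum cliques whose intersection graph is connected) to show each component has a large common intersection $D_i$, note the $D_i$ are pairwise disjoint, and apply Haxell's independent transversal theorem to the parts $D_1,\dots,D_r$ inside $H[\bigcup_i D_i]$. Your sketch correctly identifies Haxell's theorem as the engine and correctly derives the pairwise overlap bound $|K\cap K'|\geq 2\omega-\Delta-1$, but it has two genuine gaps.

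First, the claim that disjoint maximum cliques are impossible is false. Take $H$ to be two copies of $K_\omega$ joined by a single edge: then $\Delta=\omega$, the hypothesis $\omega\geq 3(\Delta+1)/4$ holds for $\omega\geq 3$, and the two $K_\omega$'s are disjoint maximum cliques. Your edge-counting argument does not produce a contradiction: a vertex of $K$ having few neighbours in $K'$ cannot be ``promoted'' into $K'$ (that would require it to be adjacent to \emph{all} of $K'$), so nothing forces a clique of size $\omega+1$. The correct proof does not need all maximum cliques to intersect; disjointness is handled automatically because distinct components of the intersection graph contribute distinct, disjoint parts to the Haxell instance. Second, Haxell's theorem cannot be applied ``to the blocks given by maximum cliques'': those blocks are neither disjoint nor a partition, and pairwise overlap bounds alone do not give the disjoint parts Haxell requires. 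You need the full Hajnal lemma (not just the two-clique case) to pass from pairwise intersections to a large \emph{common} intersection $D_i$ of each whole component, together with the observation that any $v\in D_i$ is adjacent to every other vertex of $\bigcup\mathcal{C}_i$, giving $|\bigcup\mathcal{C}_i|\leq\Delta+1$ and hence $|D_i|\geq 2\omega-(\Delta+1)\geq(\Delta+1)/2$; only then does the degree condition of Haxell's theorem (each part of size at least $2\Delta$ of the induced subgraph) come out of the constant $3/4$. The iterative ``recurse in $H\setminus N[v_0]$'' construction in the middle of your writeup does no work and, as you acknowledge, can stall; it should be deleted rather than patched.
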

\begin{theorem}\label{th0}\cite{christofides2013note} Let $H$ is a connected graph with $\Delta(H)=\Delta$ and  $\omega(H)\geq\frac{2}{3}(\Delta +1)$, then $H$ contains an independent set intersecting every maximum clique unless it is the strong product of an odd hole and a clique.
\end{theorem}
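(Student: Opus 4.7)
The plan is to begin with a structural lemma showing that under the hypothesis $\omega(H)\geq \tfrac{2}{3}(\Delta+1)$, any two maximum cliques must overlap in a large set. If $K_1,K_2$ are maximum cliques and $t:=|K_1\cap K_2|$, then for any $v\in K_1\setminus K_2$ the vertex $v$ has $\omega-1$ neighbours inside $K_1$, hence at most $\Delta-\omega+1$ outside $K_1$, and in particular at most $\Delta-\omega+1$ neighbours in $K_2\setminus K_1$. A double count of the edges between $K_1\setminus K_2$ and $K_2\setminus K_1$, combined with the fact that $K_1\cup K_2$ is $K_{\omega+1}$-free, yields a lower bound of the form $t\geq 3\omega-2\Delta-2$, which is strictly positive under our assumption. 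In particular, the maximum cliques form a pairwise intersecting family.

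Next I would exploit this intersection structure to define a ``clique exchange'' operation: for two maximum cliques $K_1,K_2$ meeting in the common part $C:=K_1\cap K_2$, the bipartite non-edge graph between $K_1\setminus K_2$ and $K_2\setminus K_1$ is nearly perfect, since each side has size $\omega-t$ and each of its vertices has at most $\Delta-\omega+1$ neighbours on the other side. From this one extracts pairs of non-adjacent vertices $\{u_1,u_2\}$ with $u_i\in K_i\setminus K_{3-i}$, which will serve as the building blocks of an independent transversal. A key auxiliary notion is that of a ``kernel'' vertex---one lying in all maximum cliques; if such a vertex exists, the independent set $\{v\}$ trivially works, so the interesting case is when the intersection of all maximum cliques is empty.

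The core of the argument is then to build an auxiliary graph $\mathcal{K}(H)$ whose vertices are the maximum cliques of $H$, with two cliques $K,K'$ adjacent when their symmetric difference is ``small'' in the sense governed by the intersection lemma. Using the exchange operation I would show that $\mathcal{K}(H)$ has a very restricted shape: locally it is a path or a cycle, and an independent transversal of the maximum cliques of $H$ corresponds exactly to a proper $2$-colouring of $\mathcal{K}(H)$ with an additional independence constraint. When $\mathcal{K}(H)$ is bipartite (or a union of paths and even cycles in this sense), a greedy construction picking one vertex from $K\setminus\bigcup_{K'\neq K}K'$ (or, where this set is empty, a carefully chosen matched pair) produces the desired independent set.

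The main obstacle, and the most delicate part, is to identify precisely when the construction fails and to pin that failure down to the strong product $C_{2k+1}\boxtimes K_n$. Here an odd cycle in $\mathcal{K}(H)$ prevents any consistent choice, and one must show that this combinatorial obstruction can only arise from the geometric structure of an odd hole whose ``thickening'' by $K_n$ yields exactly the common parts forced by the intersection lemma. This step requires reconstructing $H$ from $\mathcal{K}(H)$: one shows that the cliques must be of size $2n$, that consecutive cliques along the odd cycle in $\mathcal{K}(H)$ share exactly $n$ vertices, and that the adjacency pattern between these shared blocks matches the strong product. Verifying that no other exceptional graph survives this analysis (in particular ruling out even-cycle configurations and short odd-cycle degeneracies using connectedness of $H$) is where I expect the bulk of the technical work to lie.
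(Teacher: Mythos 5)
This statement is quoted by the paper from \cite{christofides2013note} (Christofides--Edwards--King) and is not proved in the paper at all, so there is no internal proof to compare against; what you have written must therefore stand on its own as a proof of a nontrivial published theorem, and it does not. The first concrete problem is quantitative: your pairwise-intersection bound $t\geq 3\omega-2\Delta-2$ degenerates to $t\geq 0$ exactly at the threshold $\omega=\frac{2}{3}(\Delta+1)$, so it does not even establish that the maximum cliques pairwise intersect in the boundary case the theorem must cover. The standard route is Hajnal's theorem, which for \emph{any} family of maximum cliques gives $\bigl|\bigcup_i K_i\bigr|+\bigl|\bigcap_i K_i\bigr|\geq 2\omega$; combined with $\bigl|\bigcup_i K_i\bigr|\leq \Delta+1$ when the intersection is nonempty, this yields a common core of size at least $2\omega-\Delta-1\geq\frac{1}{3}(\Delta+1)$ for whole clusters of cliques, not just pairs. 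Your argument never controls intersections of more than two cliques, and without that the ``cluster'' structure on which the rest of the plan depends is not available.

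The second and more serious gap is that the core of your argument --- the auxiliary graph $\mathcal{K}(H)$ being ``locally a path or a cycle,'' the equivalence of a hitting stable set with a proper $2$-colouring, and the greedy construction --- is asserted rather than proved, and as stated it is not true: the intersection pattern of maximum cliques under this hypothesis is not in general a disjoint union of paths and cycles. The actual proof reduces the problem to finding an independent transversal of the (pairwise disjoint) cores $D_1,\dots,D_m$ of the clique clusters, and then invokes Haxell's independent-transversal theorem together with the Aharoni--Berger--Ziv/Haxell--Szab\'o characterization of the extremal configurations; it is precisely that characterization which forces the exceptional graph to be the strong product of an odd hole and a clique. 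None of this machinery appears in your proposal, and you yourself concede that identifying the exceptional structure is ``where the bulk of the technical work'' lies --- which is to say, the decisive step is missing. As it stands the proposal is an outline of what a proof would need to accomplish, not a proof.
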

In the next results by using Theorem \ref{th5} and Theorem \ref{th0} we prove that the Lemma \ref{ml1} is true for the case that $(p,q)=(d-1,2)$ and $(p,q)=(d-2,3)$.
\begin{proposition}\label{p1}
	Let $H$  is a connected graph with $\Delta(H)=d\geq 7$, $K_d\setminus e$-free and $\omega(H)=d-1$, also let  $(p,q)=(d-1,2)$ or $(p,q)=(d-2,3)$. Then there exists a $(V_1,V_2)$-partition of $V(H)$, so that $H[V_1]$ is $K_p$-free,  and $H[V_2] $ is $K_2$-free(~or $K_3$-free).	
\end{proposition}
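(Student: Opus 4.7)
The plan is to reduce the proposition to the existence of an independent set $I$ that meets every maximum clique of $H$. Indeed, if such an $I$ is found, setting $V_2 := I$ and $V_1 := V(H) \setminus I$ makes $H[V_2]$ edgeless (hence $K_3$-free), while the clique-hitting property forces $\omega(H[V_1]) \leq \omega(H) - 1 = d - 2$, so $H[V_1]$ is $K_{d-1} = K_p$-free. Thus the whole argument boils down to producing such a transversal independent set.

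I would split on $d$. When $d \geq 7$, the inequality $\omega(H) = d-1 \geq \tfrac{3(d+1)}{4}$ holds, and Theorem~\ref{th5} immediately supplies an independent set $I$ with $\omega(H \setminus I) \leq \omega(H) - 1$; in particular $I$ meets every maximum $(d-1)$-clique. When $d = 6$, only the weaker bound $\omega(H) = 5 \geq \tfrac{2}{3}(d+1) = \tfrac{14}{3}$ is available, so I would invoke Theorem~\ref{th0} instead. To do so I must exclude the exceptional case in which $H$ is the strong product of an odd hole with a clique: if $H \cong C_{2k+1} \boxtimes K_n$, then $\Delta(H) = 3n-1$ and $\omega(H) = 2n$, so $\omega = \Delta - 1$ forces $n = 2$ and thus $\Delta = 5$, contradicting $d \geq 6$.

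The only real obstacle is this exception check in Theorem~\ref{th0}, and as indicated above it is dispatched by a single parameter computation on the strong product. Once $I$ is in hand, the decomposition $(V(H) \setminus I,\, I)$ meets both conclusions essentially by definition, completing the argument.
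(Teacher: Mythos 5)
Your proposal is correct and follows essentially the same route as the paper: reduce to an independent set hitting every maximum clique, apply Theorem~\ref{th5} when $d\geq 7$ (since $d-1\geq \tfrac{3(d+1)}{4}$ there), and apply Theorem~\ref{th0} when $d=6$ after ruling out the strong-product exception. Your explicit parameter computation $\Delta(C_{2k+1}\boxtimes K_n)=3n-1$, $\omega=2n$ is in fact a cleaner justification of that exclusion than the paper's brief assertion.
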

\begin{proof} Assume that $(p,q)=(d-1,2)$. Since $d\geq 7$, according to Theorem \ref{th5}, we know that $H$ contains an independent set that intersects every maximum clique. Let's denote this stable set as $S$. By considering the graph $(H\setminus S, S)$, we complete the proof.
	
	Now, let's consider the case when $(p,q)=(d-2,3)$. Again, considering $d\geq 7$ and $\omega=d-1$, if $d\geq 8$, we can apply Theorem \ref{th5} to conclude that $H$ contains an independent set that intersects every maximum clique. Let's denote this stable set as $S_1$, and define $H_1=H\setminus S_1$. If $\omega (H_1)\leq p-3$, then the proof is complete. Otherwise, if $\omega(H_1)=d-2$, the maximum degree of $H_1$ is at most $d-1$. Let's assume it is exactly $d-1$. If not, we can introduce additional vertices and edges to obtain a graph $H'1$ that has maximum degree $d-1$ and is $K{d-1}$-free. Now, applying Theorem \ref{th5} to $H'_1$, we can find another stable set $S_2$ that intersects every maximum clique in $H_1$. Define $H_2=H_1\setminus S_2$. Since $\omega (H_2)\leq p-3$, the proof is complete by considering the graph $(H\setminus (S_1\cup S_2), S_1\cup S_2)$.
	
	Finally, let's consider the case when $d=7$. According to Theorem \ref{th5}, we know that $H$ contains an independent set that intersects every maximum clique.Assume that $S_1$ is the stable set obtained from Theorem \ref{th5}. Let $H_1=H\setminus S_1$. We can assume that $\omega(H_1)=5$, as otherwise, the proof is complete. Since $\omega(H_1)=5$, we can verify that $H_1$ is not the strong product of an odd hole and a clique. Consequently, by Theorem \ref{th0}, $H_1$ contains a stable set that intersects every maximum clique. Let $S_2$ be this stable set. Therefore, the proof is complete by considering the graph $(H\setminus (S_1\cup S_2), S_1\cup S_2)$. Thus, the proposition holds.
\end{proof}

\begin{proof}[\bf Proof   Lemma	\ref{ml1}]
	As $H$ is $ K_{d}\setminus e$-free,  by Theorem \ref{M.th},  it can be checked that  Lemma \ref{ml1}  holds for the case that $q\geq 4$, also by Proposition \ref{p1}, Lemma \ref{ml1}  holds for the case that $q=2,3$, hence the proof is complete.  
\end{proof}
According to Theorem \ref{mth1}, for any graph $H$ with a maximum degree of $5$ that is $K_5\setminus e$-free, there exists a partition $(V_1, V_2)$ of $V(H)$ such that $H[V_1]$ is a maximum acyclic induced subgraph, and $\omega(H[V_2])$ as well as $\Delta(H[V_2])$ are both at most $3$. Moreover, $H[V_2]$ is either $K_3$-free or its $3$-cliques are disjoint, which is an improvement over Theorem \ref{th1}.

Similarly, according to Theorem \ref{mth1}, for any graph $H$ with a maximum degree of $d \geq 5$ that is $K_{d+1}\setminus e$-free, there exists a partition $(V_1, V_2)$ of $V(H)$ such that $H[V_1]$ is a maximum acyclic induced subgraph, and $\omega(H[V_2])$ as well as $\Delta(H[V_2])$ are both at most $d-2$. Additionally, $H[V_2]$ is either $K_{d-2}$-free or its $(d-2)$-cliques are disjoint. In fact, if we take $\mathcal{G}=\mathcal{C}={C_n | n\geq 3}$, then Theorem \ref{mth1} encompasses and improves upon Theorem \ref{th1}. Hence, it is evident that this result is superior to the result of Theorem \ref{th1}.

\subsection{ Some research problems related to the contents of this paper.}

In this section, we propose some research problems related to the contents of this paper. The first problem concerns  Theorem \ref{mth1}, as we address below: 
\begin{problem}
	Suppose that $H = (V, E)$ is a connected graph with maximum degree $\Delta(H)\geq 5$  and  $H$ is $K_{\Delta(H)+1}\setminus e$-free. Suppose that $p$ and $q$  are two positive integers, such that $p\geq 2, q= 3$ and  $\Delta(H)+1=p+q$. Set $\G$ as a collection of   graphs with minimum degree at least $p-1$. Then there exists a $(V_1,V_2)$-partition of $V(H)$, such that $H[V_1]$  is a maximum order $\G$-free induced subgraph of $H$,   $\Delta(H[V_2])\leq q$, and  either $H[V_2] $ is $K_q$-free subgraph or its $q$-cliques  are disjoint.
	
\end{problem}

\begin{problem}
	Let $p,q$ be two positive integers,   where   $ p\geq 2, q\geq 3$, and $p+q=d+1$. Set $\G$ as a collection of some $(p-1)$-regular graph, $\G'$ as a collection of some $(q-1)$-regular graph, and suppose that $H$  is a connected graph with  $\Delta(H)=d\geq 5$ and $\omega(H)= \omega\leq d-2$. Then there exist $(V_1,V_2)$-partition of $V(H)$, so that $H[V_1]$ is $\G$-free, $V_1$ has the M-P-size,  and $H[V_2] $ is $\G'$-free.
	
\end{problem}

\begin{problem}
	Let $p,q$ be two positive integers,   where   $ p\geq 2, q\geq 4$, and $p+q=d+1$. Suppose that $H$ is a graph with  $\Delta(H)=d\geq 5$ and $\omega(H)\leq d-2$. If $d= p+q+1$, then there exists a $(V_1,V_2)$-partition of $V(H)$, such that $H[V_1]$ is a maximum $(p-2)$-degenerate induced subgraph and $H[V_2]$ is $(q-2)$-degenerate. 
	
\end{problem}

\section{Declarations}
{\bf Conflict of Interest:} On behalf of all authors, the corresponding author states	that there is no conflict of interest.

{\bf Data Availability Statement:}	 No data were generated or used in the preparation of this paper.
%========================================================================================
%%%%%%%%%%%%%%%%%%%%%%%%%%%%%%%%%%%%%%%%%
\bibliographystyle{plain}
\bibliography{v.p}
\end{document}